\documentclass[11pt]{article}

\usepackage[margin=1in]{geometry}
\usepackage[authoryear,longnamesfirst]{natbib}
\usepackage{booktabs}
\usepackage{graphicx}
\usepackage{xcolor}
\usepackage{url}
\usepackage[hidelinks]{hyperref}
\usepackage{amsmath,amssymb,amsthm,bm}

\newcommand{\reals}{\mathbb{R}}
\newcommand{\E}{\mathbb{E}}
\newcommand{\Var}{\operatorname{Var}}

\newcommand{\Prob}{\mathbb{P}}
\newcommand{\sign}{\operatorname{sign}}

\newcommand{\argmin}{\operatorname*{arg\,min}}

\newcommand{\todist}{\xrightarrow{d}}

\newcommand{\genr}{\varphi}                       
\newcommand{\Dsq}[1]{D_{#1}^{2}}                  
\newcommand{\Done}{\Dsq{1}}
\newcommand{\Dzero}{\Dsq{0}}
\newcommand{\Sig}{\Sigma}
\newcommand{\LLR}{\Lambda}                          
\newcommand{\LLRhat}{\widehat{\Lambda}}
\newcommand{\Rstar}{R^{\star}}                      
\newcommand{\bx}{\bm{x}}
\newcommand{\bmu}{\bm{\mu}}
\newcommand{\bbeta}{\bm{\beta}}
\newcommand{\bZ}{\bm{Z}}
\newcommand{\T}{\mathsf{T}}                          
\newcommand{\eqd}{\stackrel{d}{=}}
\newcommand{\sieve}{m}                               

\theoremstyle{plain}
\newtheorem{theorem}{Theorem}
\newtheorem{lemma}[theorem]{Lemma}
\newtheorem{proposition}[theorem]{Proposition}
\newtheorem{corollary}[theorem]{Corollary}
\theoremstyle{definition}
\newtheorem{assumption}{Assumption}

\hypersetup{
  pdftitle={Closed-form fractional radial links for elliptical Mahalanobis discriminant analysis},
  pdfauthor={Serhii Zabolotnii},
  pdfkeywords={elliptical distributions, discriminant analysis,
    Mahalanobis distance, generalized additive model, radial link,
    Bayes-optimality, heavy tails}
}

\title{Closed-form fractional radial links for elliptical Mahalanobis discriminant analysis}

\author{Serhii Zabolotnii\\
\small Department of Information, Multimedia Technologies and Design,\\
\small Cherkasy State Business College, Cherkasy 18028, Ukraine\\
\small State Scientific Research Institute of Armament and Military Equipment\\
\small Testing and Certification, Cherkasy, Ukraine\\
\small Department of Cybernetics and Applied Mathematics,\\
\small Uzhhorod National University, Uzhhorod, Ukraine\\
\small ORCID: \href{https://orcid.org/0000-0003-0242-2234}{0000-0003-0242-2234}\\
\small \texttt{zabolotnii.serhii@csbc.edu.ua}}
\date{2026}

\begin{document}
\maketitle

\begin{abstract}
\noindent
We study binary classification under shared-generator elliptical
class-conditional distributions. The log-likelihood ratio is exactly an
additive function of the two squared Mahalanobis radii, with radial link
$\genr=\log g$. In the Gaussian case $\genr$ is affine and the model reduces to
quadratic discriminant analysis (QDA); off the Gaussian carrier, the Bayes link
is non-identity. We derive the radial-link family from the within-class
distribution of Mahalanobis radii and estimate a finite fractional-power
projection rather than tune a generic spline smoother. We prove that the
link is identifiable from the radius law, that this fractional-power
stochastic-polynomial plug-in estimator is $\sqrt{n}$-consistent and
asymptotically normal under finite-moment regularity conditions, and that the
induced classifier is asymptotically Bayes-optimal in an iterated sieve limit.
The structural bridge, GAM membership, and ``identity link $\iff$ affine
generator'' dichotomy are verified in Lean~4 without unproven placeholders.
Against the global Mahalanobis-GAM of Ghosh et al.\ (2025), re-implemented with
\texttt{mgcv} REML splines at equal input budget, the derived link is never
significantly worse on three UCI benchmarks and decisively better on
\texttt{breast\_cancer} ($[+0.009,+0.021]$ global, $[+0.109,+0.136]$
global+local). Across six real financial series spanning three asset classes,
under temporal-dependence-robust validation, it is again never significantly
worse than the fitted GAM and significantly better on three of five
heavy-tailed series plus the light-tailed control. Against QDA it is
significantly better on the heaviest-tailed series (oil $[+0.024,+0.070]$,
S\&P~500 $[+0.038,+0.126]$, JPY/USD $[+0.009,+0.047]$) and ties elsewhere, the
advantage tracking tail-heaviness and vanishing on the light-tailed control. A
closed-form rate simulation corroborates the $\sqrt{n}$ rate and the predicted
excess-risk dichotomy between QDA's approximation-limited floor and the derived
link's vanishing excess risk. The contribution is therefore no significant loss
relative to a tuned global GAM without spline smoothing-parameter selection,
plus improved accuracy over QDA precisely where the generator's curvature
bites.
\end{abstract}

\noindent\textbf{Highlights:}
\begin{itemize}
\item Elliptical Bayes rule is additive in the two Mahalanobis radii
\item Radial link is identity exactly under Gaussianity, else non-identity
\item Derived fractional-power sieve avoids fitting a spline GAM
\item The estimator is root-n consistent and Bayes-optimal in an iterated limit
\item It matches the fitted GAM and beats QDA on six real heavy-tailed series
\end{itemize}

\noindent\textbf{Keywords:} elliptical distributions, discriminant analysis,
Mahalanobis distance, generalized additive model, radial link,
Bayes-optimality, heavy tails

\noindent\textbf{2020 MSC:} 62H30, 62G05, 62G20

\section{Introduction}
\label{sec:intro}

Quadratic discriminant analysis (QDA) classifies a feature $\bx\in\reals^d$ by
comparing two squared Mahalanobis radii,
$\Dsq{c}(\bx)=(\bx-\bmu_c)^{\T}\Sig_c^{-1}(\bx-\bmu_c)$, one per class. It is
the Bayes rule when the classes are Gaussian. When they are not, QDA is no
longer optimal, and a natural fix has gained currency: keep the Mahalanobis
radii as features but feed them to a flexible nonparametric classifier rather
than to the fixed quadratic. \citet{ghosh2025} do exactly this, combining a
global and a local Mahalanobis distance per class inside a generalized additive
model (GAM), and report strong performance across multivariate benchmarks.
This raises a structural question that their construction leaves implicit: when
the classes are elliptical but non-Gaussian, \emph{what function of the radii is
the Bayes rule}, and is fitting it nonparametrically the right thing to do, or
can it be written down?

This paper answers both. For two elliptical class-conditional densities sharing
a radial generator $g$ --- so that $\log p_c(\bx)=\genr(\Dsq{c}(\bx))-\tfrac12
\log|\Sig_c|+\text{const}$ with $\genr=\log g$ --- the log-likelihood ratio is,
identically,
\begin{equation}
\label{eq:bridge-intro}
\LLR(\bx)=\genr(\Done(\bx))-\genr(\Dzero(\bx))+\tfrac12\log\frac{|\Sig_0|}{|\Sig_1|}
+\log\frac{\pi_1}{\pi_0}.
\end{equation}
The Bayes rule $\sign(\LLR)$ is therefore a GAM in the two radii whose link is
the radial generator $\genr$. This GAM structure is exactly the elliptic-class
result of \citet{ghosh2025} (their Theorem~1, stated in the Mahalanobis
distances $\delta_c=\sqrt{\Dsq{c}}$); we use the squared-radius form throughout
and take their observation as the starting point rather than as a contribution.
The Gaussian case is degenerate: there $g(t)\propto e^{-t/2}$, so $\genr$ is
affine, \eqref{eq:bridge-intro} is affine in the radii, and the Bayes rule is
exactly QDA. In that case the Mahalanobis-GAM classifier
and the Gaussian discriminant are the same object under two names --- a
tautology. The content of \eqref{eq:bridge-intro} appears only off the Gaussian
carrier, where $\genr$ is non-affine and the link is genuinely non-identity.

The practical consequence is a choice that the literature has not framed
sharply: the non-identity link can be \emph{fitted} (a spline GAM, as in
\citet{ghosh2025}) or \emph{derived}. We take the second route. The
stochastic-polynomial apparatus of \citet{kunchenko2002} --- the polynomial
maximization method (PMM) and its parametrically-adaptive transition polynomial
\citep{zabolotnii2026patp} --- supplies the fractional-power radial basis. The
coefficients are then estimated by supervised logistic risk minimization on the
resulting per-class radial features, while the captured-fraction principle
\citep{zabolotnii2026paper4} motivates why this low-dimensional basis is the
right target for the derived link. Deriving the link replaces a $2$-dimensional
nonparametric additive fit by a $1$-dimensional projection of a known functional
form, which we test as the source of parity with a tuned fitted GAM at lower
tuning cost, and of the QDA advantage in heavy-tailed regimes. Seen through the
single-index lens \citep{ichimura1993,hardle1993}, where both the index direction
and the link are estimated nonparametrically, the elliptical structure hands us
the index for free (the Mahalanobis radius) and the link's form from the
generator, reducing an inherently nonparametric problem to a parametric one ---
the reason a $\sqrt{n}$ rate is attainable below.

\paragraph{Contributions.}
\begin{enumerate}
\item[\textbf{(C1)}] \emph{Structure.} Taking the elliptic GAM structure of
\citet{ghosh2025} as given, we add the dichotomy that the identity link
represents \emph{exactly} the affine log-generators. With the scatter scale fixed
as in Section~\ref{sec:theory}, normalizability then makes the Gaussian carrier
the unique shared-generator elliptical family for which QDA is Bayes; every
normalizable non-affine generator requires a non-identity link. The bridge, the
GAM membership,
this dichotomy, and a concrete Student-$t$ witness are machine-checked sorry-free
in Lean~4 (Section~\ref{sec:framework}); the dichotomy, not the bridge, is the
new content.

\item[\textbf{(C2)}] \emph{Identifiability.} The radial link $\genr$ is
identifiable from the within-class law of the Mahalanobis radius, with an
explicit inversion (Lemma~\ref{lem:ident}). This makes ``estimate the link'' a
well-posed problem and dictates a basis in powers of the radius.

\item[\textbf{(C3)}] \emph{Estimation theory.} The plug-in link estimator
built from a fractional-power sieve is $\sqrt{n}$-consistent and asymptotically
normal (Theorem~\ref{thm:clt}); the induced plug-in classifier is
asymptotically Bayes-optimal under the elliptical family
(Theorem~\ref{thm:bayes}). A closed-form rate simulation corroborates both
(Section~\ref{sec:exp}). This is the main contribution beyond the near-definitional
\eqref{eq:bridge-intro}: a derived link with statistical guarantees.

\item[\textbf{(C4)}] \emph{Evidence and scope.} Against the global
Mahalanobis-GAM of \citet{ghosh2025}, re-implemented with penalized-spline smoothing
(\texttt{mgcv}, REML), the derived closed-form link is not significantly worse
than the fitted link at equal budget on real benchmarks, with no spline
smoothing selection; across six heavy-tailed
financial series spanning three asset classes, under temporal-dependence-robust
validation, it is again never significantly worse than the fitted GAM and beats QDA on
the heaviest-tailed series, the advantage tracking tail-heaviness and vanishing on a
light-tailed control. We test the elliptical assumption on the real series directly
(it is rejected through asymmetry, so those results are robustness evidence). We delineate when the advantage
appears --- it grows with covariance heterogeneity and tail-heaviness
and vanishes toward the Gaussian limit --- making it adaptive rather than a
heavy-tail niche.
\end{enumerate}

The structural layer (C1) refines a Gaussian unification \citep{zabolotnii2026paper4}
that, in the Gaussian carrier alone, is tautological; the elliptical case is the
first where the statement is falsifiable and useful. The weight of the paper is
on the estimation theory (C3) and the benchmark (C4): a derived radial link that
is provably consistent and Bayes-optimal, machine-certified in its algebra, and
on par with the standard fitted alternative at a lower tuning cost.

\section{Background and related work}
\label{sec:background}

\subsection{Elliptical distributions and Mahalanobis radii}
A random vector $\bx\in\reals^d$ is elliptically distributed with location
$\bmu$, scatter $\Sig\succ0$ and radial generator $g$ if its density is
$p(\bx)=|\Sig|^{-1/2}g\big((\bx-\bmu)^{\T}\Sig^{-1}(\bx-\bmu)\big)$
\citep{fang1990,cambanis1981}. The quadratic form
$\Dsq{}(\bx)=(\bx-\bmu)^{\T}\Sig^{-1}(\bx-\bmu)$ is the squared Mahalanobis
radius; the family includes the Gaussian ($g(t)\propto e^{-t/2}$), the
multivariate Student-$t_\nu$ ($g(t)\propto(1+t/\nu)^{-(\nu+d)/2}$) and the
power-exponential laws. The radial decomposition $\bx=\bmu+R\,\Sig^{1/2}\bm{u}$,
with $\bm u$ uniform on the sphere and $R^2\eqd\Dsq{}$, is the standard tool
\citep{cambanis1981} and underlies our identifiability result. The scatter
$\Sig$ that defines the radius is itself an active estimation target for
heavy-tailed elliptical data: alongside the classical robust estimators
(Tyler's $M$-estimator and related elliptical-scatter methods,
\S\ref{sec:discussion}), recent work gives high-breakdown,
high-efficiency scatter estimators for the full symmetric-elliptical class
\citep{fishbone2024}, any of which can supply the whitening our plug-in
procedure needs.

\subsection{Mahalanobis-distance classification and additive models}
Using per-class Mahalanobis radii as low-dimensional discriminative summaries is
made precise by \citet{ghosh2025}, who augment the global radius with a
\emph{local} (nearest-neighbour) Mahalanobis distance and combine both in a GAM
fitted by smoothing splines; this is the closest prior work and our principal
comparator. Their method treats the radius link as an unknown smooth function
and selects spline smoothness (and, for the full local variant, a local-distance
bandwidth). Our equal-budget comparison keeps the global per-class radii but
uses the elliptical generator to fix a fractional-power radial-link family, so
only finite projection coefficients are fitted. Generalized additive models themselves are due to
\citet{hastie1990}: a sum of smooth univariate transforms with a chosen or
fitted link; the broader programme of making discriminant analysis flexible by
recasting it as a nonparametric regression --- flexible discriminant analysis by
optimal scoring \citep{hastie1994} --- is the general backdrop against which our
construction fixes, rather than fits, the transform. The novelty here is not the
GAM template but the observation that, for elliptical classes, the additive
structure and the link are not modelling choices --- they are forced by
\eqref{eq:bridge-intro}, and the link is a known functional of the generator.
A parallel nonparametric line classifies by a centrality summary rather than by a
fitted link. Maximum-depth and depth-versus-depth rules \citep{ghosh2005depth,li2012}
assign each point by its vector of class depths, and \citet{hubert2017} combine
depth \emph{and} distance features in a single classifier --- closest in spirit to
our use of the Mahalanobis radii, since the radius is a distance-type centrality
summary. These methods share the ``reduce to a low-dimensional centrality
summary'' idea but do not exploit the elliptical generator and so give no link to
derive: the summary is used as an input to a learned rule, whereas here the
generator dictates the exact function of the summary that is Bayes-optimal.
Classical discriminant analysis \citep{mclachlan2004} is the affine-link
special case of \eqref{eq:bridge-intro}.

\subsection{Known versus estimated links: the single-index connection}
Writing the Bayes rule as a transform of the radii places our problem in the
single-index family: a response modelled through a univariate link applied to a
low-dimensional index of the covariates \citep{ichimura1993,hardle1993}. That
literature confronts two coupled unknowns --- the index \emph{direction} and the
link \emph{shape} --- and estimates both nonparametrically, paying a slower
rate for the unknown direction and requiring bandwidth or smoothing choices for
the unknown link \citep{hardle1993}. Our setting is a strict, and favourable,
special case. Here the index is not a free direction to be learned --- the
elliptical geometry fixes it as the squared Mahalanobis radius, supplied by the
per-class whitening --- and the link, rather than being an arbitrary smooth
function, has its \emph{form} pinned by the radial generator, leaving only the
finite fractional-power coefficients we project onto (Section~\ref{sec:theory}).
What remains is a \emph{known-form link on a known index}: a parametric
estimation problem rather than a single-index one, which is why a $\sqrt{n}$ rate
is available here where the general single-index model attains only a
nonparametric one. Ghosh's spline GAM is the single-index-style response to the same structure
(estimate the link nonparametrically); the present paper is the parametric-link
response that the elliptical assumption makes admissible.

\subsection{Stochastic polynomials and the captured-fraction functional}
The polynomial maximization method \citep{kunchenko2002} estimates parameters by
projecting a score onto a finite basis of (possibly fractional) powers, with an
asymptotic efficiency governed by a \emph{captured-fraction} functional: the
fraction of Fisher information retained by the projection. The signed-parity,
continuous-$\alpha$ fractional-power family we use is the parametrically-adaptive
transition polynomial \citep{zabolotnii2026patp}. This apparatus has been carried
to non-Gaussian regression, to robust sequential change-point detection --- where
the log-likelihood ratio is approximated on a generalized stochastic basis to
adapt CUSUM-type procedures \citep{zabolotnii2026gsa} --- and, recently, to a
moment-free formulation through the empirical characteristic function for
infinite-variance laws \citep{zabolotnii2026cf}; none of these targets
discriminant analysis or elliptical classification. We use that
captured-fraction view only to motivate the finite fractional-power projection;
all identities and rates needed here are stated in the present paper. The
Gaussian projection case is discussed separately in \citet{zabolotnii2026paper4},
but the Gaussian carrier makes that unification tautological --- the projection
is onto an affine link and reproduces the quadratic discriminant. The present
paper carries the projection to
elliptical carriers, where the link is non-affine: the same fractional-power
basis now \emph{derives} the radial link of \eqref{eq:bridge-intro}, and the
captured-fraction view supplies its consistency and efficiency.

\subsection{Heavy tails and moment-free estimation}
The parametric route to heavy-tailed classification fits a specific
heavy-tailed family per class and plugs in its Bayes rule --- most commonly the
multivariate Student-$t$, as in robust $t$-mixture model-based discrimination
\citep{peel2000}. This commits to one generator and re-estimates its degrees of
freedom from data; our derived link instead keeps the generator's \emph{radial
form} while leaving its fractional-power coefficients free, so it need not select
a single parametric law and degrades gracefully to QDA as the tails lighten.
Empirical-characteristic-function and codifference estimators
\citep{feuerverger1977,feuerverger1990} handle heavy tails in the univariate
setting; the same idea has recently been brought inside the Kunchenko apparatus,
giving a moment-free stochastic-polynomial estimator that stays valid for
infinite-variance laws \citep{zabolotnii2026cf}. Its multivariate radial-link
analogue is the boundary of our theory (Section~\ref{sec:discussion}), where the
radius loses second moments.

\subsection{Asymptotic and sieve theory}
The asymptotic results we use are standard; the contribution is their application
to the derived radial link. $M$-estimation under possible misspecification
\citep{vandervaart1998} gives the rate and the sandwich; because the per-class
$(\widehat{\bmu},\widehat{\Sig})$ are estimated first, the radii are
\emph{generated regressors} \citep{pagan1984}, whose first-stage contribution to
the second-stage influence function is the two-step correction of
\citet{newey1994}. Treating the fractional-power family as a sieve, its
consistency and the $\sqrt{n}$-normality of smooth functionals follow the sieve
theory of \citet{chen2007}; the reduction from plug-in excess risk to
link-estimation error follows standard classification theory \citep{devroye1996}.

\section{The elliptical radial-link framework}
\label{sec:framework}

\subsection{Setup}
Two classes $c\in\{0,1\}$ with priors $\pi_c$ have elliptical class-conditional
densities $p_c(\bx)=|\Sig_c|^{-1/2}g_c(\Dsq{c}(\bx))$ with $\Sig_c\succ0$ and
radial generator $g_c>0$. Write $\genr_c=\log g_c$ for the \emph{radial link}.
We treat the common-generator case $g_0=g_1=g$ ($\genr_0=\genr_1=\genr$) as the
main object and note the class-specific extension where it matters. Let
$\sigma(u)=(1+e^{-u})^{-1}$, $\eta(\bx)=\Prob(c=1\mid\bx)$, $\LLR=\log(\eta/(1-\eta))$
the log-likelihood ratio (LLR), and $\Rstar$ the Bayes risk.

\subsection{The radial-link bridge and GAM membership}
\begin{proposition}[Radial-link bridge; GAM membership]
\label{prop:bridge}
For elliptical class-conditionals,
\begin{equation}
\label{eq:bridge}
\LLR(\bx)=\genr_1(\Done(\bx))-\genr_0(\Dzero(\bx))
+\tfrac12\log\frac{|\Sig_0|}{|\Sig_1|}+\log\frac{\pi_1}{\pi_0}.
\end{equation}
As a function of $\bx$, $\LLR\in\operatorname{span}\{1,\;\genr_0\circ D_0^2,\;
\genr_1\circ D_1^2\}$: the Bayes rule $\sign(\LLR)$ is a generalized additive
model in the two Mahalanobis radii with link $\genr$.
\end{proposition}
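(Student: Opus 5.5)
The plan is a direct computation from Bayes' theorem followed by a reading-off of the span structure. First I will write the posterior log-odds via Bayes' rule: since $\eta(\bx)=\pi_1p_1(\bx)/(\pi_0p_0(\bx)+\pi_1p_1(\bx))$, we get
$\LLR(\bx)=\log(\eta/(1-\eta))=\log\frac{\pi_1p_1(\bx)}{\pi_0p_0(\bx)}$. This is well defined at every $\bx$ because $g_c>0$ and $\Sig_c\succ0$ make both densities strictly positive, so no null-set caveat is needed.

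Next I will substitute the elliptical form $p_c(\bx)=|\Sig_c|^{-1/2}g_c(\Dsq{c}(\bx))$ from the Setup. Taking logarithms termwise gives
$\log p_c(\bx)=\genr_c(\Dsq{c}(\bx))-\tfrac12\log|\Sig_c|$, using $\genr_c=\log g_c$. Subtracting the class-$0$ expression from the class-$1$ expression and adding $\log(\pi_1/\pi_0)$ yields \eqref{eq:bridge} exactly. This is the identity; no limits or integrability are involved. The normalizing constant of $g_c$ is absorbed in $g_c$ itself, and is therefore carried by $\genr_c$.

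For GAM membership I will note that the last two terms of \eqref{eq:bridge} do not depend on $\bx$; call their sum $a$. Then $\LLR=a\cdot1+1\cdot(\genr_1\circ D_1^2)+(-1)\cdot(\genr_0\circ D_0^2)$, so $\LLR$ lies in $\operatorname{span}\{1,\genr_0\circ D_0^2,\genr_1\circ D_1^2\}$ with explicit coefficients $(a,-1,1)$. Consequently $\sign(\LLR)$ is the sign of an additive predictor $\beta_0+f_0(D_0^2)+f_1(D_1^2)$, where each component is a univariate transform of one radius. In the common-generator case both transforms equal $\genr$ up to sign, which is the sense in which the link is $\genr$. Finally I will recall that $\sign(\LLR)$ is the Bayes rule, since thresholding the posterior at $1/2$ is equivalent to thresholding $\LLR$ at $0$.

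There is no genuine obstacle here; the only step requiring care is interpretive. ``GAM with link $\genr$'' should be stated precisely: the additive components are $\genr$ applied to the radii, and the response link is logistic, with $\eta=\sigma(\LLR)$. I will spell this out so that it is not confused with a GAM link function on the mean. I will also remark that in the class-specific case the two components use $\genr_0$ and $\genr_1$ respectively, and the span statement is unchanged.
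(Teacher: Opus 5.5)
Your proposal is correct and follows the paper's own argument: write $\LLR=\log\bigl(\pi_1p_1/(\pi_0p_0)\bigr)$, substitute $p_c=|\Sig_c|^{-1/2}e^{\genr_c(\Dsq{c})}$, and read off span membership with coefficients $(a,-1,1)$. Your extra remarks are accurate clarifications rather than a different route: $g_c>0$ makes $\LLR$ defined everywhere, and ``link $\genr$'' means additive components $\genr\circ\Dsq{c}$ under a logistic response link.
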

\begin{proof}
Take logs of $\pi_1p_1/(\pi_0p_0)$ and substitute $p_c=|\Sig_c|^{-1/2}
e^{\genr_c(\Dsq{c})}$; \eqref{eq:bridge} is then an additive combination of the
constant, $\genr_0\circ D_0^2$ and $\genr_1\circ D_1^2$.
\end{proof}

Proposition~\ref{prop:bridge} is the squared-radius restatement of
\citet[Theorem~1]{ghosh2025}, who state it in the Mahalanobis distances
$\delta_c=\sqrt{\Dsq{c}}$ and use it to motivate \emph{fitting} the link as a
GAM smooth. We record it to fix notation; the new content is the dichotomy of
Proposition~\ref{prop:dichotomy} and the derived-link theory of
Section~\ref{sec:theory}.

\subsection{The identity-link dichotomy}
The Mahalanobis-GAM is QDA precisely when the link is affine, and that happens
precisely for Gaussian carriers.

\begin{proposition}[Identity link $\iff$ affine generator]
\label{prop:dichotomy}
The identity-link span $\operatorname{span}\{1,\mathrm{id}\}$ over functions of a
radius equals exactly the affine functions $\{t\mapsto at+b\}$. Consequently:
\textup{(i)} if $\genr$ is affine then $\LLR$ lies in the identity-link span
$\operatorname{span}\{1,D_0^2,D_1^2\}$ and the Bayes rule is QDA; \textup{(ii)}
Gaussian generators are exactly the affine case ($\genr(t)=at+b$ with $a<0$;
equivalently $g(t)\propto e^{-t/2}$ after the scale normalization that fixes
$\Sig$); \textup{(iii)} for any non-affine generator --- e.g.\ the Student-$t_\nu$
link $\genr(t)=-\tfrac{\nu+d}{2}\log(1+t/\nu)$ --- and generic class parameters
(distinct $(\bmu_c,\Sig_c)$, so the two radii are not almost-surely equal and the
non-affine terms do not cancel), $\LLR$ lies outside the identity-link span, so a
non-identity link is genuinely required.
\end{proposition}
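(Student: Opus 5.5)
The plan is to prove the span identity first, since it is pure linear algebra, and then read off (i)–(iii) from it together with Proposition~\ref{prop:bridge}. By definition $\operatorname{span}\{1,\mathrm{id}\}$ consists of the linear combinations $b\cdot 1+a\cdot\mathrm{id}$, which are the maps $t\mapsto at+b$. Conversely, every affine map is such a combination, so the two sets coincide.

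For (i), I substitute $\genr(t)=at+b$ into \eqref{eq:bridge}. This gives
\[
\LLR=aD_1^2-aD_0^2+\tfrac12\log\frac{|\Sig_0|}{|\Sig_1|}+\log\frac{\pi_1}{\pi_0},
\]
which visibly lies in $\operatorname{span}\{1,D_0^2,D_1^2\}$. Since each $D_c^2$ is a quadratic form in $\bx$, the rule $\sign(\LLR)$ is the QDA rule.

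For (ii), one direction is immediate: $\genr(t)=\log c-t/2$ is affine. For the converse, suppose $\genr(t)=at+b$. Normalizability of $|\Sig|^{-1/2}e^{b}e^{a(\bx-\bmu)^\T\Sig^{-1}(\bx-\bmu)}$ forces $a<0$, because for $a\ge0$ the integral diverges. The density is then Gaussian with covariance $\Sig/(-2a)$. The scatter is only defined up to the trade $(\Sig,g)\mapsto(k\Sig,\,t\mapsto k^{d/2}g(kt))$, so fixing the scale convention lets us rescale to $a=-1/2$, which gives $g\propto e^{-t/2}$.

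Part (iii) is the main obstacle. I would argue by contradiction. Suppose $\genr(D_1^2)-\genr(D_0^2)=\alpha+\beta_0D_0^2+\beta_1D_1^2$ on $\reals^d$, or almost everywhere, which by continuity of $\genr$ on $(0,\infty)$ is the same. The idea is to decouple the two radii.

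\begin{itemize}
\item The map $\bx\mapsto(D_0^2(\bx),D_1^2(\bx))$ is a pair of two distinct quadratics. When $(\bmu_0,\Sig_0)\ne(\bmu_1,\Sig_1)$, its Jacobian has rank $2$ on a nonempty open set: the gradients $2\Sig_c^{-1}(\bx-\bmu_c)$ are not everywhere parallel, because otherwise the two radii would be functionally dependent, which genericity excludes.
\item The image of that open set is therefore an open set $U\subset(0,\infty)^2$. On $U$ the identity reads $[\genr(t_1)-\beta_1t_1]-[\genr(t_0)+\beta_0t_0]=\alpha$ with $t_0,t_1$ varying independently.
\item Holding $t_0$ fixed and varying $t_1$, the function $\genr(t_1)-\beta_1t_1$ is constant on an interval, so $\genr$ is affine on that interval.
\end{itemize}

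To globalize, I would first try analyticity of $\genr$, which holds for the Student-$t$ witness. Then affine on an interval implies affine everywhere, contradicting non-affinity. If only smoothness is assumed, I would restrict the claim to the generic open region, or cover $(0,\infty)$ with such intervals by moving along $\bx$.

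For the Student-$t$ witness I would check directly that $\genr''(t)=\tfrac{\nu+d}{2}(\nu+t)^{-2}\ne0$. Hence $\genr$ is non-affine on every interval, and the contradiction above is immediate without any analyticity argument.
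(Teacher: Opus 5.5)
The span identity, (i) and (ii) are correct and match the paper's support for them: the Lean lemmas \texttt{mem\_affine\_span\_iff\_affine} and \texttt{affine\_link\_mem\_identity\_span}, plus the normalizability remark giving $a<0$. For (iii) you take a different and more substantive route than the paper. The paper only certifies the link-level fact that a non-affine $\genr$ lies outside $\operatorname{span}\{1,\mathrm{id}\}$ (the witnesses $t^2$ and the Student-$t$ link, \texttt{sq\_not\_mem\_affine\_span} and \texttt{tGen\_not\_affine}). It then lets the genericity clause carry the passage to $\LLR(\bx)$. Your rank-$2$ decoupling of $(D_0^2,D_1^2)$ actually attempts to prove that the non-affine terms cannot cancel.

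The gap is your first bullet: distinct $(\bmu_c,\Sig_c)$ does not give rank $2$ on an open set.
\begin{itemize}
\item For $d=1$ the Jacobian is $2\times1$, so the decoupling yields nothing for any parameters.
\item For $d\ge2$, suppose $\Sig_0^{-1}(\bx-\bmu_0)\parallel\Sig_1^{-1}(\bx-\bmu_1)$ for all $\bx$. Scaling $\bx$ along rays makes every vector an eigenvector of $\Sig_1\Sig_0^{-1}$, so $\Sig_1=k\Sig_0$, and then $\bmu_0=\bmu_1$.
\item Conversely, in this concentric-proportional case $D_1^2=D_0^2/k$ with $k\ne1$. The radii are not a.s.\ equal, yet they are functionally dependent, and the statement's gloss of genericity does not exclude this.
\end{itemize}
This is not a technicality, because the conclusion can then fail:
\begin{itemize}
\item The Kotz-type link $\genr(t)=c\log t-t/2$ (with $c\ne0$, $c>-d/2$) gives $\LLR=\tfrac{k-1}{2k}D_0^2+\text{const}$.
\item In $d=1$, take the entire, non-affine link $\genr(t)=-\tfrac{B}{2m}t+\epsilon\cos(2\pi\sqrt t/m)$ (with $B,m>0$), $\mu_0=0$, $\mu_1=m$ and unit scales. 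The cosine terms cancel identically, leaving $\LLR=\tfrac{B}{2m}(D_0^2-D_1^2)+\log(\pi_1/\pi_0)$. So analyticity cannot repair the step.
\end{itemize}

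You must assume explicitly $d\ge2$ and not ($\bmu_0=\bmu_1$ with $\Sig_1\propto\Sig_0$). Under that hypothesis your argument is correct, and even the analyticity step becomes unnecessary. Every level ellipsoid $\{D_1^2=t\}$ with $t>0$ then meets the rank-$2$ set; otherwise $D_0^2$ would be constant on it, forcing concentric proportional ellipsoids. Hence $\genr-\beta_1\,\mathrm{id}$ is locally constant on all of $(0,\infty)$.

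For the Student-$t$ witness you can avoid the issue in every dimension. Its $\LLR=-\tfrac{\nu+d}{2}\log\frac{\nu+D_1^2}{\nu+D_0^2}+\text{const}$ is bounded, since the ratio is bounded above and below. For distinct parameters it is also nonconstant. Any bounded element of $\operatorname{span}\{1,D_0^2,D_1^2\}$ is a bounded polynomial of degree at most $2$ in $\bx$, hence constant, so the Student-$t$ $\LLR$ lies outside the span.
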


Proposition~\ref{prop:dichotomy} is the structural reason the Gaussian
unification of \citet{zabolotnii2026paper4} is tautological while the elliptical
case is not: identity-link adequacy is equivalent to Gaussianity, and the radial
curvature $\genr(t)+t/2$ is exactly what QDA cannot represent.

\subsection{Machine-checked formalization}
\label{sec:lean}
The algebraic content of Propositions~\ref{prop:bridge}--\ref{prop:dichotomy}
is formalized in Lean~4 (Mathlib v4.26.0), \texttt{sorry}-free, with axiom
footprint \texttt{[propext, Classical.choice, Quot.sound]} only. We summarize
the development in Table~\ref{tab:lean}; the file is
\texttt{EllipticalUnification.lean} in the reproducibility repository. The Lean
certifies the \emph{structure} (the bridge, GAM membership, the affine
dichotomy, a concrete non-affine Student-$t$ witness, and that the multivariate
Mahalanobis radius is a genuine quadratic form, so the bridge transfers to
dimension $d$). It does \emph{not} certify the estimation theory of
Section~\ref{sec:theory} or the efficiency claims, which are analytic and
numerical. Machine-checked statistics is still nascent: existing Lean
developments target generalization bounds and empirical-process theory
\citep{zhang2026lean}, whereas we verify the structural identities that underlie
a classifier's likelihood ratio. We include the formalization not as a
contribution pillar but as reproducible certainty that the structural claims are
exactly as stated.

\begin{table}[t]
\centering
\caption{The Lean~4 development (sorry-free). Each theorem is an exact
restatement of a structural claim in Section~\ref{sec:framework}.}
\label{tab:lean}
\small
\begin{tabular}{@{}lp{8.4cm}@{}}
\toprule
Lean theorem & Statement \\
\midrule
\texttt{ellLLR\_eq\_radial\_difference} & the bridge \eqref{eq:bridge} (any generator, any dimension) \\
\texttt{ellLLR\_mem\_span\_radial} & $\LLR\in\operatorname{span}\{1,\genr\!\circ\! D_0^2,\genr\!\circ\! D_1^2\}$ (GAM with link $\genr$) \\
\texttt{affine\_link\_mem\_identity\_span} & affine $\genr\Rightarrow\LLR\in\operatorname{span}\{1,D_0^2,D_1^2\}$ (Gaussian collapse to QDA) \\
\texttt{mem\_affine\_span\_iff\_affine} & the identity-link span is exactly the affine functions \\
\texttt{sq\_not\_mem\_affine\_span} & the quadratic radial link $t\mapsto t^2$ is not in the identity span \\
\texttt{tGen\_not\_affine} & the Student-$t$ generator is a concrete non-affine instance \\
\texttt{mahalaMV\_expand} & the multivariate Mahalanobis radius is quadratic\,$+$\,linear\,$+$\,const \\
\bottomrule
\end{tabular}
\end{table}

\section{Identifiability, consistency, and Bayes-optimality of the derived link}
\label{sec:theory}

Proposition~\ref{prop:bridge} is near-definitional. The technical contribution is
that the link it features is identifiable, estimable at the parametric rate, and
yields a Bayes-optimal classifier under the stated sieve conditions. Throughout, the estimator is the one used in
the experiments: estimate $(\bmu_c,\Sig_c)$ per class, form the squared radii
$\widehat D_c^2$, build radial features $\psi_j(t)=t^{p_j}$ on the radius $t\ge0$ (the
implementation uses the signed-parity form $\sign(t)|t|^{p_j}$ of the PATP basis,
which coincides with $t^{p_j}$ on $t\ge0$), with a finite working set of powers
$p\in\{1,0.5,1.5\}$ (the leading $p_1=1$ makes the Gaussian case terminate at
one term), and fit $\ell_2$-regularized logistic regression of the label on the
concatenated per-class features --- a logistic GAM whose link is approximated by
the fractional-power sieve. The three powers are the finite truncation used in
practice; the approximation theory (Theorem~\ref{thm:approx}) uses a growing power
sequence that augments the affine seed with powers tending to $0$. The
$\ell_2$ ridge is a fixed numerical stabilizer taken to vanish for the
asymptotics, so the limiting object is the unregularized logistic $M$-estimator
analyzed in Theorem~\ref{thm:clt}.

\subsection{Identifiability of the radial link}
\begin{lemma}[Radial profile; identifiability]
\label{lem:ident}
If $\bx\mid c$ is elliptical with parameters $(\bmu_c,\Sig_c,g_c)$, the
within-class law of the squared radius $T_c:=\Dsq{c}(\bx)$ has density
$f_{T_c}(t)=(\omega_d/2)\,t^{d/2-1}g_c(t)$ on $t>0$, with
$\omega_d=2\pi^{d/2}/\Gamma(d/2)$. Hence, once the elliptical scale is fixed by
$\Sig_c$, the generator $g_c$ --- and the radial link $\genr_c=\log g_c$ --- is
identified from the law of $T_c$, up to an additive constant for the decision
rule:
\begin{equation}
\label{eq:ident}
\genr_c(t)=\log f_{T_c}(t)-(d/2-1)\log t+\text{const}.
\end{equation}
\end{lemma}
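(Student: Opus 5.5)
The plan is to compute the law of $T_c$ by standardizing and then using polar coordinates, and to read off the identification by inverting the resulting density formula.

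\emph{Standardization.} Set $\bZ=\Sig_c^{-1/2}(\bx-\bmu_c)$. The Jacobian of $\bx\mapsto\bZ$ is $|\Sig_c|^{1/2}$, and it cancels the factor $|\Sig_c|^{-1/2}$ in $p_c$. Hence $\bZ$ has the spherical density $g_c(\|\bZ\|^2)$ on $\reals^d$, and $T_c=\|\bZ\|^2$.

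\emph{Density of $T_c$.} For any bounded measurable $h$, pass to polar coordinates $\bZ=r\bm u$:
\begin{equation*}
\E\,h(T_c)=\int_{\reals^d}h(\|\bm z\|^2)\,g_c(\|\bm z\|^2)\,d\bm z
=\omega_d\int_0^\infty h(r^2)\,g_c(r^2)\,r^{d-1}\,dr .
\end{equation*}
Here $\omega_d=2\pi^{d/2}/\Gamma(d/2)$ is the surface area of $S^{d-1}$. This is the radial decomposition of \citet{cambanis1981}. Now substitute $t=r^2$, so $dr=\tfrac12 t^{-1/2}\,dt$ and $r^{d-1}=t^{(d-1)/2}$. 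This gives
\begin{equation*}
\E\,h(T_c)=\int_0^\infty h(t)\,\tfrac{\omega_d}{2}\,t^{d/2-1}g_c(t)\,dt .
\end{equation*}
Since $h$ is arbitrary, $f_{T_c}(t)=(\omega_d/2)\,t^{d/2-1}g_c(t)$ on $t>0$. As a consistency check, normalizability of $p_c$ is exactly the statement that this density integrates to one.

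\emph{Inversion.} On $t>0$ we have $t^{d/2-1}>0$, and $g_c>0$ by the setup. Taking logarithms of the density formula therefore gives
\begin{equation*}
\genr_c(t)=\log f_{T_c}(t)-(d/2-1)\log t-\log(\omega_d/2),
\end{equation*}
which is \eqref{eq:ident}. Two generators that produce the same law of $T_c$ thus agree Lebesgue-a.e. on $(0,\infty)$. If the generators are assumed continuous, they agree everywhere. Identifiability holds only once the scale is fixed by $\Sig_c$: the pair $(\Sig_c,g_c)$ and the pair $(s\Sig_c,\,s^{d/2}g_c(s\,\cdot))$ define the same $p_c$, but they give different $T_c$. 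This is why the lemma conditions on a fixed $\Sig_c$. The phrase ``up to an additive constant for the decision rule'' needs one more step. Because $f_{T_c}$ is a density, the constant in \eqref{eq:ident} is in fact determined. Even if it were not, a common shift of $\genr$ cancels in $\genr(\Done)-\genr(\Dzero)$ in Proposition~\ref{prop:bridge}, and a class-specific shift is absorbed into the intercept.

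\emph{Main obstacle.} The only delicate point is the measure-theoretic step: an a.e.-defined density determines the generator only a.e. I would handle this by stating identification modulo null sets, or by invoking continuity of $g_c$. Everything else is the routine change of variables above.
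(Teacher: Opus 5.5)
Your proposal is correct and follows the paper's proof essentially step for step: whiten to the spherical density $g_c(\|\bZ\|^2)$, pass to spherical coordinates with surface area $\omega_d$, substitute $t=r^2$ to get $r^{d-1}\,dr=\tfrac12 t^{d/2-1}\,dt$, and invert by taking logarithms. Your test-function derivation, the explicit constant $-\log(\omega_d/2)$, and the caveat that $g_c$ is identified only Lebesgue-a.e.\ (everywhere if $g_c$ is continuous) are careful refinements the paper leaves implicit; the one slip is verbal, since $|\Sig_c|^{1/2}$ is the Jacobian of the inverse map $\bZ\mapsto\bx$, and that is the factor which correctly enters the density of $\bZ$.
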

\begin{proof}
Whiten $\bm y=\Sig_c^{-1/2}(\bx-\bmu_c)$ (density $g_c(\|\bm y\|^2)$, the
Jacobian cancelling $|\Sig_c|^{-1/2}$); then $T_c=\|\bm y\|^2$, and in spherical
coordinates $\mathrm d\bm y=\omega_d r^{d-1}\,\mathrm dr$ with $r^2=t$ gives
$r^{d-1}\mathrm dr=\tfrac12 t^{d/2-1}\mathrm dt$, whence $f_{T_c}$. Invert and
take logs.
\end{proof}

Equation~\eqref{eq:ident} makes ``estimate the link'' well-posed and dictates a
basis in powers of $t$: the Gaussian link is linear ($\genr(t)=-t/2$), the
Student-$t_\nu$ link is logarithmic
($\genr(t)=-\tfrac{\nu+d}{2}\log(1+t/\nu)$). The elliptical scale indeterminacy
--- $(\Sig,g(\cdot))$ and $(c\Sig,c^{d/2}g(c\cdot))$ give the same density --- is
resolved by conditioning on $\Sig_c$, as the whitening does; it is harmless for
the decision rule, which uses $\genr$ only up to a constant. Identifiability is
what makes the plug-in route legitimate: because $\genr$ is a functional of the
observable radial law, a sieve that captures it yields a consistent estimate of
the link, which is the content of Theorems~\ref{thm:clt}--\ref{thm:approx} below.

\subsection{$\sqrt{n}$-consistency of the derived-link estimator}
Fix the sieve dimension $\sieve$ and stack features
$\bZ_{\sieve}^{0}(\bx)=(1,\{\psi_j(D_c^2(\bx))\}_{c,j\le\sieve})$ using the true
class radii. The implemented generated-regressor version is
$\widehat{\bZ}_{n,\sieve}(\bx)=(1,\{\psi_j(\widehat D_c^2(\bx))\}_{c,j\le\sieve})$.
The head is the logistic $M$-estimator
$\widehat{\bbeta}_n=\argmin_{\bbeta}n^{-1}\sum_i
\ell(y_i,\bbeta^{\T}\widehat{\bZ}_{n,\sieve}(\bx_i))$ with logistic loss $\ell$;
let $\bbeta_{\sieve}^{\star}=\argmin_{\bbeta}\E\,\ell(y,\bbeta^{\T}
\bZ_{\sieve}^{0})$ be the pseudo-true coefficient and
$\LLR_{\sieve}^{\star}=\bbeta_{\sieve}^{\star\T}\bZ_{\sieve}^{0}$ the
pseudo-true radial-GAM score.

\begin{assumption}\label{ass}
\textup{(A1)} $\E\|\bZ_{\sieve}^{0}\|^2<\infty$, i.e.\ $\E\,T_c^{2\bar p}<\infty$ for
$\bar p=\max_j p_j$; for Student-$t_\nu$, $T_c/d\sim F(d,\nu)$ gives
$\E\,T_c^{k}<\infty\iff k<\nu/2$, so a power $p$ needs $p<\nu/4$ (full basis
$\bar p=1.5$: $\nu>6$; the sub-basis $p\le1$: $\nu>4$). \textup{(A2)} the
population risk has a unique minimizer $\bbeta_{\sieve}^{\star}$ with nonsingular
Hessian $H_{\sieve}$. \textup{(A3)} $(\widehat{\bmu}_c,\widehat{\Sig}_c)$ are
$\sqrt{n}$-consistent ($\E\|\bx\|^4<\infty$, i.e.\ $\nu>4$). \textup{(A4)} for
every fractional power in the finite sieve, the local expansion of
$\psi_j(\widehat D_c^2)$ is valid in $L^2$; equivalently, the derivative
singularity $\psi_j'(t)=p_jt^{p_j-1}$ at $0$ is dominated by an integrable
envelope against the radial law ($f_{T_c}\propto t^{d/2-1}$, so $d\ge2$ is
sufficient for the powers used here).
\end{assumption}

\begin{theorem}[Rate and asymptotic normality]
\label{thm:clt}
Under Assumption~\ref{ass}, at fixed $\sieve$,
\[
\sqrt{n}\,(\widehat{\bbeta}_n-\bbeta_{\sieve}^{\star})\todist
\mathcal N\!\big(0,\,H_{\sieve}^{-1}\,\Omega_{\sieve}\,H_{\sieve}^{-1}\big),
\]
where $\Omega_{\sieve}=\Var(\xi)$ is the variance of the \emph{full} influence
function $\xi=s(\bx,y)+\delta(\bx)$, $s$ the logistic score and $\delta$ the
first-stage correction from the generated regressors $(\widehat{\bmu}_c,
\widehat{\Sig}_c)$ \citep{newey1994}. Hence
$\LLRhat_{\sieve}=\widehat{\bbeta}_n^{\T}\widehat{\bZ}_{n,\sieve}$ is $\sqrt{n}$-consistent for
$\LLR_{\sieve}^{\star}$ uniformly on compacts.
\end{theorem}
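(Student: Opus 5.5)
The argument is a standard two-step $M$-estimation argument with generated regressors, in three stages: consistency, a linearization of the empirical score that isolates the first-stage effect, and a delta-method conclusion for $\LLRhat_{\sieve}$.

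\textbf{Consistency.} Write $\theta=(\bmu_0,\Sig_0,\bmu_1,\Sig_1)$ and $\widehat\theta_n$ for its first-stage estimate. Let
\[
\widehat M_n(\bbeta)=n^{-1}\sum_i\ell\big(y_i,\bbeta^{\T}\widehat{\bZ}_{n,\sieve}(\bx_i)\big),
\qquad
M(\bbeta)=\E\,\ell\big(y,\bbeta^{\T}\bZ_{\sieve}^{0}\big).
\]
The logistic loss is convex in $\bbeta$ and Lipschitz in the index. By (A3), $\widehat\theta_n\to\theta$, so $\widehat D_c^2\to D_c^2$ pointwise. Under (A1), dominated convergence gives $\widehat M_n(\bbeta)-M(\bbeta)\to0$ in probability for each fixed $\bbeta$. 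Pointwise convergence of convex functions upgrades to uniform convergence on compacts. Together with the unique minimizer in (A2), the convexity argmin theorem \citep{vandervaart1998} yields $\widehat{\bbeta}_n\to\bbeta_{\sieve}^{\star}$ in probability.

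\textbf{Linearization of the score.} Let $s(\bx,y;\bbeta,\theta)=(\sigma(\bbeta^{\T}\bZ)-y)\bZ$, where $\bZ$ is built from the radii at $\theta$. Expand the first-order condition $n^{-1}\sum_i s(\bx_i,y_i;\widehat{\bbeta}_n,\widehat\theta_n)=0$ around $(\bbeta_{\sieve}^{\star},\theta)$, which gives
\[
0=n^{-1}\sum_i s_i^{\star}-H_{\sieve}\,(\widehat{\bbeta}_n-\bbeta_{\sieve}^{\star})+G\,(\widehat\theta_n-\theta)+o_p(n^{-1/2}).
\]
Here $G=\partial_\theta\E\, s$ is the Jacobian in the nuisance parameter, and the sample Hessian converges to $H_{\sieve}$ by the LLN under (A1). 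The first-stage estimators are sample means and covariances, so $\sqrt n(\widehat\theta_n-\theta)=n^{-1/2}\sum_i\phi(\bx_i,y_i)+o_p(1)$ with a finite-variance influence function $\phi$ by (A3). Substituting gives the full influence function $\xi=s+\delta$ with $\delta=G\phi$, as in \citet{newey1994}. Inverting $H_{\sieve}$, which (A2) allows, and applying the multivariate CLT to the i.i.d.\ terms $\xi_i$ produces the sandwich $H_{\sieve}^{-1}\Omega_{\sieve}H_{\sieve}^{-1}$.

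\textbf{Main obstacle: the $\theta$-derivative.} The expansion in $\theta$ is the delicate step. Perturbing $\theta$ changes the features through
\[
\partial_\theta\psi_j(D_c^2)=p_j\,(D_c^2)^{p_j-1}\,\partial_\theta D_c^2,
\]
and this blows up near $D_c^2=0$ when $p_j<1$. I would establish a stochastic-equicontinuity bound on the remainder:
\[
\sup_{\|\theta'-\theta\|\le Cn^{-1/2}}\Big\|n^{-1}\sum_i\big[s(\cdot;\bbeta,\theta')-s(\cdot;\bbeta,\theta)\big]-\E\big[s(\cdot;\bbeta,\theta')-s(\cdot;\bbeta,\theta)\big]\Big\|=o_p(n^{-1/2}).
\]
The route is via a mean-value form together with the integrable envelope of (A4). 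With $f_{T_c}\propto t^{d/2-1}g_c(t)$ from Lemma~\ref{lem:ident}, the envelope requires $\E\,T_c^{p_j-1}\|\partial_\theta D_c^2\|<\infty$. Since $\|\partial_\theta D_c^2\|\lesssim 1+\|\bx\|^2$, this holds for $p_j\ge1/2$ and $d\ge2$ near zero. The tail part is controlled by the moments in (A1) and (A3).

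\textbf{Conclusion for $\LLRhat_{\sieve}$.} On a compact set $K$, decompose
\[
\LLRhat_{\sieve}-\LLR_{\sieve}^{\star}=(\widehat{\bbeta}_n-\bbeta_{\sieve}^{\star})^{\T}\widehat{\bZ}_{n,\sieve}+\bbeta_{\sieve}^{\star\T}\big(\widehat{\bZ}_{n,\sieve}-\bZ_{\sieve}^{0}\big).
\]
The first term is $O_p(n^{-1/2})$ because $\widehat{\bZ}_{n,\sieve}$ is bounded on $K$. The second term is $O_p(n^{-1/2})$ uniformly on $K$ because $t\mapsto t^{p}$ is Hölder with exponent $\min(p,1)$ and $D_c^2$ is smooth in $\theta$. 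The near-zero radii need separate care: either accept the slower $n^{-p/2}$ there or use the mean-value bound where $D_c^2$ is bounded away from zero. I expect this second term is where the claimed uniform $\sqrt n$ rate requires this qualification.
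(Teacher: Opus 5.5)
Your three stages are the paper's proof written out in detail. The paper only invokes consistency and normality of the convex logistic $M$-estimator, uniform-on-compacts $O_p(n^{-1/2})$ convergence of the plug-in radii, (A4) near zero, and the Newey--McFadden correction. Your Jacobian $G=\partial_\theta\E\,s$ is the right object. It contains $\E[\sigma'(\bbeta_{\sieve}^{\star\T}\bZ_{\sieve}^0)\,\bZ_{\sieve}^0\,\bbeta_{\sieve}^{\star\T}\partial_\theta\bZ_{\sieve}^0]$, which the paper's displayed expectation factor omits. That term does not vanish under correct specification (already for two Gaussian classes with $\sieve=1$), so $\delta\not\equiv0$ in general. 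A minor slip: with $H_{\sieve}$ the positive-definite Hessian, it enters your expansion with a plus sign, which leaves the sandwich unchanged.

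The genuine gap is the last step, which you leave open by suggesting that the uniform $\sqrt{n}$ claim fails near the class centres at rate $n^{-p/2}$. That rate comes from applying H\"older continuity of $t\mapsto t^{p}$ to $\widehat{D}_c^2-\Dsq{c}$, which discards the structure of the features. Write $\psi_j(\widehat{D}_c^2(\bx))=\|\widehat{\Sig}_c^{-1/2}(\bx-\widehat{\bmu}_c)\|^{2p_j}$. The unsquared distance satisfies
\[
\big|\widehat{D}_c(\bx)-D_c(\bx)\big|\le\|\widehat{\Sig}_c^{-1/2}-\Sig_c^{-1/2}\|\,\|\bx-\widehat{\bmu}_c\|+\|\Sig_c^{-1/2}\|\,\|\widehat{\bmu}_c-\bmu_c\|,
\]
which is $O_p(n^{-1/2})$ uniformly on compacts, including at $\bx=\bmu_c$. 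Moreover, $u\mapsto u^{2p_j}$ is locally Lipschitz on $[0,\infty)$ whenever $p_j\ge1/2$. So for every power in the working set $\{1,0.5,1.5\}$, the term $\bbeta_{\sieve}^{\star\T}(\widehat{\bZ}_{n,\sieve}-\bZ_{\sieve}^0)$ is $O_p(n^{-1/2})$ uniformly on compacts, and the claim holds without qualification. Your caution is warranted only for $p_j<1/2$ (as in the growing sieve of Theorem~\ref{thm:approx}). There the sharp feature error is $n^{-p_j}$, attained at $\bx=\bmu_c$ where $\widehat{D}_c^2\asymp n^{-1}$, not $n^{-p_j/2}$.

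The same observation is needed for your equicontinuity step. A mean-value expansion evaluates $\partial_\theta\psi_j$ at intermediate parameters $\tilde\theta$, and for $\bx$ near $\bmu_c$ some such $\tilde\theta$ has $D^2_{\tilde\theta}(\bx)=0$. Checking that $T_c^{p_j-1}(1+\|\bx\|^2)$ is integrable at the true $\theta$ therefore does not dominate $\sup_{\|\theta'-\theta\|\le\epsilon}\|\partial_\theta s(\cdot;\bbeta,\theta')\|$. With your bound $\|\partial_\theta\Dsq{c}\|\lesssim1+\|\bx\|^2$, that supremum is infinite on a ball around $\bmu_c$ for every $p_j<1$. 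Use instead $\|\partial_{\bmu}\Dsq{c}\|\lesssim D_c$ and $\|\partial_{\Sig^{-1}}\Dsq{c}\|\lesssim\Dsq{c}$. This gives $\|\partial_\theta\psi_j(\Dsq{c})\|\lesssim D_c^{2p_j-1}+D_c^{2p_j}$, which is bounded near the centre uniformly over a neighbourhood of $\theta$ when $p_j\ge1/2$. The supremum of the score derivative is then dominated by a multiple of $1+\sum_c T_c^{2\bar p}$, integrable by (A1). It is also continuous in $\theta'$ at $\theta$ except at the class centres, so dominated convergence delivers both the Jacobian $G$ and the $o_p(n^{-1/2})$ remainder.
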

\begin{proof}
Consistency by uniform convergence of the empirical risk with a well-separated
minimizer and asymptotic normality of the smooth convex logistic
$M$-estimator \citep[Thm.\ 5.7, 5.23]{vandervaart1998}. The first stage enters
$\widehat{\bZ}_{n,\sieve}$ smoothly: $\widehat D_c^2-\Dsq{c}=O_p(n^{-1/2})$ uniformly on
compacts \citep[the plug-in Mahalanobis-distance convergence of][Lemma~1, whose
Remark notes it also holds for robust non-moment scatter]{ghosh2025} under (A3),
and (A4) gives the $L^2$ differentiability of the generated fractional-power
regressors near zero, hence the linear term $\delta$. The expectation factor of
$\delta$ is
$\E[(\sigma(\bbeta^{\star\T}\bZ_{\sieve}^{0})-y)\,\partial\bZ_{\sieve}^{0}/\partial\gamma]$,
$\gamma=(\bmu,\Sig)$; under correct
specification (Gaussian, $\sieve=1$) the inner conditional mean vanishes
(Neyman-orthogonality) so $\delta\equiv0$ and the naive sandwich holds, but under
misspecification --- the regime in which the GAM beats QDA --- it does not, and
$\delta$ must be retained \citep{newey1994}. The rate and normality are
unaffected.
\end{proof}

The \emph{Gaussian-termination} remark: with $p_1=1$ first, $\sieve=1$ gives an
affine-in-radii score, asymptotically equivalent to QDA; higher powers enlarge
the model only when $\genr$ is non-affine, at no efficiency cost on Gaussian data
beyond estimating coefficients that converge to $0$.

\subsection{Approximation and asymptotic Bayes-optimality}
\begin{theorem}[Sieve approximation condition]
\label{thm:approx}
Let $\mathcal H_c$ be the $L^2(f_{T_c})$-closure of
$\operatorname{span}\{1,t^p:p\to0^+\}$. If each radial link $\genr_c$ belongs to
$\mathcal H_c$, then the target
$\LLR=\genr_1(\Done)-\genr_0(\Dzero)+\text{const}$ lies in the
$L^2(P_{\bx})$-closure of the corresponding two-class radial-feature span.
Consequently, the best score in the sieve can approximate $\LLR$ in
$L^2(P_{\bx})$. If, in addition, the population logistic risk is locally strongly
convex on the sieve scores around the approximating sequence and the minimizers
are dominated by an integrable envelope, then the pseudo-true logistic scores
satisfy $\|\LLR_{\sieve}^{\star}-\LLR\|_{L^2(P_{\bx})}\to0$; without these
regularity conditions the theorem is a sieve-approximation statement for the
target score, not an unconditional convergence theorem for all finite-sieve
logistic minimizers.
\end{theorem}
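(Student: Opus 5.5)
The plan has two parts. First I transport the per-class approximation from the radial laws to $L^2(P_{\bx})$. Then I pass from approximation to convergence of the logistic minimizers, using the extra convexity and envelope conditions.

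\emph{Step 1 (transport of norms).} Write $P_{\bx}=\pi_0P_0+\pi_1P_1$. For a function $h$ of one radius and each class $c$, I need to control $\|h\circ\Dsq{c}\|_{L^2(P_{\bx})}$ by $\|h\|_{L^2(f_{T_c})}$.
\begin{itemize}
\item Under $P_c$ the law of $\Dsq{c}$ is $f_{T_c}$ by Lemma~\ref{lem:ident}. So the $P_c$-part of the norm is exactly $\pi_c\|h\|^2_{L^2(f_{T_c})}$.
\item The cross term $\E_{P_{1-c}}h(\Dsq{c})^2$ is the obstacle: the law of $\Dsq{c}$ under the other class is not $f_{T_c}$.
\item I would handle it by a density-ratio bound. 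Require that the law of $\Dsq{c}$ under $P_{1-c}$ is dominated by $f_{T_c}$, with $\E_{P_{1-c}}h(\Dsq{c})^2\le K\|h\|^2_{L^2(f_{T_c})}$.
\item Failing a clean bound, I would define $\mathcal H_c$ with respect to the mixture law of $\Dsq{c}$. I would state this as the reading of ``corresponding closure''.
\end{itemize}

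\emph{Step 2 (approximating $\LLR$).} Pick finite combinations $h_{c,k}\in\operatorname{span}\{1,t^{p}\}$ with $h_{c,k}\to\genr_c$ in $L^2(f_{T_c})$. Set
\[
S_k=h_{1,k}(\Done)-h_{0,k}(\Dzero)+\text{const}.
\]
Each $S_k$ lies in the two-class radial-feature span for a large enough sieve. By Minkowski and Step~1,
\[
\|S_k-\LLR\|_{L^2(P_{\bx})}\le\sum_c\|h_{c,k}-\genr_c\|_{L^2}\to0 .
\]
This gives the closure claim, and hence that the best $L^2$ sieve score approximates $\LLR$.

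\emph{Step 3 (logistic minimizers).}
\begin{itemize}
\item The population logistic risk $L(f)=\E\,\ell(y,f(\bx))$ is minimized over all measurable $f$ at $f=\LLR$.
\item Its excess satisfies $L(f)-L(\LLR)=\E\,\mathrm{KL}\big(\sigma(\LLR)\,\|\,\sigma(f)\big)$. Since $\sigma'\le1/4$, this is at most $\tfrac18\|f-\LLR\|^2_{L^2}$, up to a continuity argument with dominated convergence using the envelope.
\item Hence $L(\LLR^\star_{\sieve})-L(\LLR)\le L(S_k)-L(\LLR)\to0$, because $\LLR^\star_{\sieve}$ minimizes $L$ over a sieve containing $S_k$.
\item Local strong convexity near the approximating sequence reverses the inequality: $L(f)-L(\LLR)\ge\kappa\|f-\LLR\|^2$. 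This yields $\|\LLR^\star_{\sieve}-\LLR\|_{L^2(P_{\bx})}\to0$.
\item The envelope condition keeps the minimizers inside the region where the curvature bound holds. Without it, since $\sigma'$ decays at large scores, only risk convergence survives, which is the caveat in the statement.
\end{itemize}

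The main obstacle is the cross-class term in Step~1. It is where the assumption $\genr_c\in\mathcal H_c$, stated only per class, must be strengthened or interpreted.
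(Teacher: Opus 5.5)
Your outline follows the argument the paper relies on implicitly. The paper gives no separate proof: it treats the first claim as immediate from the per-class hypothesis and the second as a consequence of the added regularity conditions. The obstacle you isolate in Step~1 is genuine, and the paper does not address it. Since $\|h\circ\Dsq{c}\|^2_{L^2(P_{\bx})}=\pi_c\|h\|^2_{L^2(f_{T_c})}+\pi_{1-c}\E_{P_{1-c}}h(\Dsq{c})^2$, the hypothesis $\genr_c\in\mathcal H_c$ controls only half the norm, and the literal statement does need more. Take a pure scale contrast ($\bmu_0=\bmu_1$, $\Sig_1=a\Sig_0$, $a>1$). Then $\Dsq{0}=a\Dsq{1}$, so $\E_{P_1}\genr(\Dsq{0})^2=\int\genr(at)^2f_T(t)\,dt$. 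This can be infinite while $\int\genr^2f_T<\infty$. For example, take a baseline $\genr(t)=-t$ that plunges to $-e^{t_k}$ on sparse unit intervals $[t_k,t_k+1]$: $f_T$ barely charges these intervals, but $aT$ hits them with probability of exponential order $e^{-t_k/a}$. Then $\LLR\notin L^2(P_{\bx})$ and the first conclusion fails. So you are right that the hypothesis must be strengthened; what remains is choosing the repair.

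Use your second repair, not the first. The density-ratio bound (a) fails in exactly the covariance-heterogeneity regime the paper emphasizes. For $g(t)\propto e^{-t^\beta/2}$ with $\beta\neq1$, under the same scale contrast, the class-$1$ law of $\Dsq{0}$ has density $a^{-d/2}\exp\{(1-a^{-\beta})t^\beta/2\}$ relative to $f_{T_0}$, which is unbounded.

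Reading (b) costs nothing. Let $Q_c$ be the $P_{\bx}$-law of $\Dsq{c}$ and take the closure in $L^2(Q_c)$. The span of $\{t^{p_j}\}$ with $p_j\to0^+$ is dense in $L^2(Q)$ for every law $Q$ on $(0,\infty)$ with $\int(t^{2\epsilon}+t^{-2\epsilon})\,dQ<\infty$. To see this, suppose $\phi\in L^2(Q)$ is orthogonal to every $t^{p_j}$.
\begin{enumerate}
\item $F(z)=\int\phi(t)t^{z}\,dQ(t)$ is analytic on $|\operatorname{Re}z|<\epsilon$, by Cauchy--Schwarz.
\item $F$ vanishes on a sequence accumulating at $0$, hence on the imaginary axis.
\item So the Fourier transform of the image of $\phi\,dQ$ under $\log$ vanishes, and $\phi=0$ $Q$-a.e.
\end{enumerate}
No moment determinacy enters. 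Incidentally, this also makes $\genr_c\in\mathcal H_c$ automatic for any $\genr_c\in L^2(f_{T_c})$. The assumption you actually need to add is therefore just $\genr_c\circ\Dsq{c}\in L^2(P_{\bx})$; the small two-sided moments hold for, e.g., Student-$t$ and power-exponential carriers. Step~1 then becomes an isometry, $\|h\circ\Dsq{c}\|_{L^2(P_{\bx})}=\|h\|_{L^2(Q_c)}$, and Step~2 goes through verbatim.

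In Step~3, the bound $L(f)-L(\LLR)\le\frac18\|f-\LLR\|^2_{L^2(P_{\bx})}$ is pointwise and needs no envelope. The comparison $L(\LLR^\star_\sieve)\le L(S_k)$ does need the sieves to be nested. Your account of the envelope is off. An integrable envelope does not confine the scores to a region where $\sigma'$ is bounded below, and when $\LLR$ is itself unbounded (light-tailed carriers with unequal scatters) no such region carries full mass. The clean route avoids curvature entirely:
\begin{enumerate}
\item Pinsker gives $L(f)-L(\LLR)\ge2\E|\sigma(f)-\sigma(\LLR)|^2$, so risk convergence yields $\sigma(\LLR^\star_\sieve)\to\sigma(\LLR)$ in $L^2$.
\item Hence $\LLR^\star_\sieve\to\LLR$ in probability, since $\LLR$ is a.s.\ finite.
\item The envelope upgrades this to $L^2(P_{\bx})$ by dominated convergence.
\end{enumerate}
Without the envelope you therefore keep convergence in probability, not merely convergence of the risk.
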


We do \emph{not} claim universal $L^2(f_T)$-completeness of the powers: bounded
powers cannot approximate faster-growing functions, and for heavy-tailed radial
laws the moment problem is indeterminate (the log-normal being the textbook
moment-indeterminate case). Theorem~\ref{thm:approx} is therefore a conditional
sieve-approximation statement. The identity $(t^p-1)/p\to\log t$ certifies the
pure log target under the usual dominated-convergence envelope; Student-$t$ links
are treated as log-type targets whose finite-sieve adequacy is checked
numerically in Section~\ref{sec:exp}. This is the elliptical, target-restricted
instance of the captured-fraction principle ($\kappa_{\sieve}\to1$ for this $\LLR$)
\citep{zabolotnii2026paper4}.

\begin{theorem}[Asymptotic Bayes-optimality, iterated limit]
\label{thm:bayes}
Let $\widehat g_{n,\sieve}=\sign(\LLRhat_{\sieve})$, and assume the fixed-sieve
estimation error and the regularized pseudo-true sieve approximation error are
dominated by an integrable envelope so that the compact convergence in
Theorem~\ref{thm:clt} and the conditional approximation in
Theorem~\ref{thm:approx} also hold in $L^1(P_{\bx})$. Then
\begin{equation}
\label{eq:risk}
0\le R(\widehat g_{n,\sieve})-\Rstar\le2\,\E|\sigma(\LLRhat_{\sieve})-\sigma(\LLR)|
\le\tfrac12\,\E|\LLRhat_{\sieve}-\LLR|
\le\tfrac12\big(\|\LLRhat_{\sieve}-\LLR_{\sieve}^{\star}\|_{L^1}
+\|\LLR_{\sieve}^{\star}-\LLR\|_{L^1}\big),
\end{equation}
the first bracket term being $O_p(n^{-1/2})$ at fixed $\sieve$
(Theorem~\ref{thm:clt}) and the second $\to0$ as $\sieve\to\infty$ under the
additional pseudo-true approximation condition in Theorem~\ref{thm:approx}.
Hence the iterated limit
$\lim_{\sieve\to\infty}\lim_{n\to\infty}[R(\widehat g_{n,\sieve})-\Rstar]=0$ in
probability: the derived-link plug-in is asymptotically Bayes-optimal under the
elliptical family.
\end{theorem}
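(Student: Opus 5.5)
The proof runs along the chain \eqref{eq:risk}, from left to right, and then passes to the two limits.

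\emph{Step 1: from risk to posterior error.} Since $\eta=\sigma(\LLR)$ is the Bayes posterior, the classical plug-in identity \citep{devroye1996} gives
\[
R(\widehat g)-\Rstar=\E\bigl[|2\eta(\bx)-1|\,\mathbf 1\{\widehat g(\bx)\ne\sign(\LLR(\bx))\}\bigr]
\]
for any rule $\widehat g$, where the expectation is conditional on the training sample. This excess risk is nonnegative, which proves the first inequality. Set $\widehat\eta=\sigma(\LLRhat_{\sieve})$. On the disagreement event, $\widehat\eta$ and $\eta$ lie on opposite sides of $1/2$, so $|2\eta-1|\le2|\widehat\eta-\eta|$. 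This yields the factor-$2$ bound.

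\emph{Step 2: Lipschitz step.} We have $\sigma'=\sigma(1-\sigma)\le1/4$, so $\sigma$ is $\tfrac14$-Lipschitz. Hence $2\E|\sigma(\LLRhat_{\sieve})-\sigma(\LLR)|\le\tfrac12\E|\LLRhat_{\sieve}-\LLR|$. Inserting $\LLR_{\sieve}^\star$ and applying the triangle inequality in $L^1(P_{\bx})$ gives the last bound. All four inequalities are pointwise or elementary; no sieve theory is used so far.

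\emph{Step 3: the two error terms.} For the estimation term at fixed $\sieve$, write
\[
\LLRhat_{\sieve}-\LLR_{\sieve}^\star=(\widehat\bbeta_n-\bbeta^\star_{\sieve})^{\T}\widehat\bZ_{n,\sieve}+\bbeta^{\star\T}_{\sieve}\bigl(\widehat\bZ_{n,\sieve}-\bZ^0_{\sieve}\bigr).
\]
The first piece is handled by Theorem~\ref{thm:clt}: $\widehat\bbeta_n-\bbeta^\star_{\sieve}=O_p(n^{-1/2})$, and $\E\|\widehat\bZ_{n,\sieve}\|$ is bounded via (A1). The second piece is $O_p(n^{-1/2})$ on compacts by (A3)--(A4). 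Extending the compact-uniform rate to an $L^1(P_{\bx})$ rate is exactly what the envelope hypothesis of the statement supplies. I would split $\E$ into a large compact $K$, where the uniform rate applies, and its complement, where the envelope controls the tail. Letting $K\uparrow\reals^d$ after $n\to\infty$ gives $\|\LLRhat_{\sieve}-\LLR^\star_{\sieve}\|_{L^1}=O_p(n^{-1/2})$, hence $\to0$ in probability.

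For the approximation term, Theorem~\ref{thm:approx} under its pseudo-true condition gives $L^2(P_{\bx})$ convergence of $\LLR^\star_{\sieve}$ to $\LLR$. Since $L^2$ convergence implies $L^1$ convergence on a probability space, and the assumed domination covers the regularized version, $\|\LLR^\star_{\sieve}-\LLR\|_{L^1}\to0$ as $\sieve\to\infty$.

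\emph{Step 4: iterated limit.} For fixed $\sieve$, the inner limit $n\to\infty$ leaves at most $\tfrac12\|\LLR^\star_{\sieve}-\LLR\|_{L^1}$ in probability. This bound is deterministic, and it vanishes as $\sieve\to\infty$. Combined with the lower bound $0$, the iterated limit is $0$ in probability.

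The main obstacle is Step 3's passage from compact-uniform convergence to $L^1$. The generated-regressor term involves $\psi_j(\widehat D_c^2)$ with unbounded radii and derivative singularities at $0$, so the envelope must dominate $|\psi_j(\widehat D_c^2)-\psi_j(D_c^2)|$ uniformly in $n$. I would first try a mean-value bound $|\widehat D_c^2-D_c^2|\lesssim(\|\widehat\bmu_c-\bmu_c\|+\|\widehat\Sig_c-\Sig_c\|)(1+\|\bx\|^2)$, combined with moment condition (A1) and the integrability near zero from (A4). If that fails, I would simply invoke the stated envelope hypothesis directly.
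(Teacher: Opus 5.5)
Your proposal follows the paper's proof essentially step for step: the Devroye--Gy\"orfi--Lugosi plug-in bound, the $\tfrac14$-Lipschitz property of $\sigma$, the triangle inequality through $\LLR_{\sieve}^{\star}$, an integrable envelope to pass from compact to $L^1(P_{\bx})$ convergence, and the approximation theorem for the second term. Your explicit estimation-error decomposition and your $L^2\Rightarrow L^1$ remark only fill in details the paper leaves implicit.

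One small caution concerns the first bracket term. A compact/complement split with an $n$-independent envelope $G$ only gives $\|\LLRhat_{\sieve}-\LLR_{\sieve}^{\star}\|_{L^1}=o_p(1)$, because the tail term $\int_{K^{c}}G\,\mathrm dP_{\bx}$ does not shrink with $n$. That suffices for the iterated limit. The stated $O_p(n^{-1/2})$ rate, however, needs the envelope to dominate the $\sqrt n$-scaled error. Your direct bound $\|\widehat\bbeta_n-\bbeta_{\sieve}^{\star}\|\,\E\|\widehat\bZ_{n,\sieve}\|$ and your mean-value bound each supply exactly that.
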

\begin{proof}
The plug-in bound $R(\sign f)-\Rstar\le2\E|\widehat\eta-\eta|$ with
$\eta=\sigma(\LLR)$, $\widehat\eta=\sigma(\LLRhat)$ \citep[Thm.\ 2.2]{devroye1996};
$\sigma$ is $\tfrac14$-Lipschitz so $2\E|\widehat\eta-\eta|\le\tfrac12
\E|\LLRhat-\LLR|$ (constants $2,\tfrac14,\tfrac12$ exact); the triangle
inequality splits estimation from approximation, with a dominating integrable
envelope upgrading the uniform-on-compacts convergence of Theorem~\ref{thm:clt}
to $L^1$ and with the pseudo-true sieve condition supplying the second term.
\end{proof}

The result is the \emph{iterated} limit, which is what the two theorems deliver;
the simultaneous regime $\sieve=\sieve_n\to\infty$ would need a uniform-in-$\sieve$
sieve rate with eigenvalue control on $H_{\sieve_n}$ against power-collinearity,
which we leave open. Finite-$\sieve$ behaviour is quantified in
Section~\ref{sec:exp}.

\subsection{When the derived link beats QDA}
\begin{corollary}[Dichotomy]
\label{cor:dichotomy}
Under Gaussianity $\LLR=\LLR_1^{\star}$ is affine, QDA carries no approximation
gap, and its excess risk is pure $O_p(n^{-1/2})$ estimation --- the derived link
can at best tie. Under any non-Gaussian generator with analytic non-affine
$\genr$, and class parameters for which the affine projection differs from the
true log-odds on a set of positive $P_{\bx}$-measure near the Bayes boundary, the
affine (QDA) score has a constant, non-vanishing approximation gap
$\|\LLR_1^{\star}-\LLR\|>0$ that no amount of data removes. With the usual margin
condition converting this score gap into excess risk, QDA carries a non-vanishing
risk floor, whereas the derived link drives the excess risk to its sieve floor
and, in the iterated limit, to $0$.
\end{corollary}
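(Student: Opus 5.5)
The plan splits along the two halves of the dichotomy and, in each half, separates the approximation term from the estimation term exactly as in the risk bound \eqref{eq:risk}.

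\emph{Gaussian half.} By Proposition~\ref{prop:dichotomy}(ii), $\genr(t)=at+b$, so Proposition~\ref{prop:dichotomy}(i) places $\LLR$ in $\operatorname{span}\{1,D_0^2,D_1^2\}$. This span is the $\sieve=1$ sieve span, because $p_1=1$. Logistic risk is strictly proper, so the population minimizer over a span containing the true log-odds is the true log-odds itself; with (A2) for uniqueness this gives $\LLR_1^\star=\LLR$. The approximation term in \eqref{eq:risk} therefore vanishes. The remaining term is $O_p(n^{-1/2})$ by Theorem~\ref{thm:clt} at $\sieve=1$, which is asymptotically QDA. For larger $\sieve$ the pseudo-true score is still $\LLR$, with the extra coefficients equal to zero. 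Hence no sieve enlargement can lower the approximation floor, which is already $0$: the derived link can at best tie.

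\emph{Non-Gaussian half, approximation gap.} $\LLR_1^\star$ is the $L^2$-type (logistic) projection onto affine-in-radii scores. The hypothesis that it differs from $\LLR$ on a set of positive $P_{\bx}$-measure immediately gives $\|\LLR_1^\star-\LLR\|>0$. This quantity is a population constant and does not depend on $n$. By Theorem~\ref{thm:clt}, $\LLRhat_1\to\LLR_1^\star$ in $L^1$, so the reverse triangle inequality gives $\liminf_n\|\LLRhat_1-\LLR\|\ge\|\LLR_1^\star-\LLR\|>0$ in probability. Analyticity and non-affinity of $\genr$ serve as the sanity argument that the hypothesis is not vacuous. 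If $\genr_1(D_1^2)-\genr_0(D_0^2)$ agreed a.e.\ with an affine function of $(D_0^2,D_1^2)$, then restricting to a curve along which $D_0^2$ is constant while $D_1^2$ varies (possible since $(\bmu_c,\Sig_c)$ are distinct) would force $\genr$ to be affine on an interval. By analyticity $\genr$ would then be affine everywhere, contradicting Proposition~\ref{prop:dichotomy}(iii).

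\emph{Converting the score gap to risk.} This is the main obstacle. The upper bound \eqref{eq:risk} runs in the wrong direction, so a lower bound is needed. I would use the exact identity $R(\sign f)-\Rstar=\E\big[|2\eta-1|\,\mathbf 1\{\sign f\ne\sign\LLR\}\big]$. It then suffices to show that the limiting QDA score $\LLR_1^\star$ has a sign disagreement with $\LLR$ on a set $A$ of positive measure on which $|2\eta-1|\ge\epsilon>0$ fails only mildly. I would proceed as follows. Where $\LLR_1^\star\ne\LLR$ near the boundary $\{\LLR=0\}$, continuity gives a positive-measure region where the signs disagree. The margin condition, $P(0<|\LLR|\le s)\le Cs^\alpha$ together with its companion lower-mass form, then guarantees that this region carries excess risk at least $c(\epsilon)>0$. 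Since $\sign\LLRhat_1\to\sign\LLR_1^\star$ off the null set $\{\LLR_1^\star=0\}$, dominated convergence transfers the floor to the empirical QDA rule.

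\emph{Derived-link half.} Finally, for the derived link, Theorem~\ref{thm:bayes} sends the excess risk to its fixed-$\sieve$ floor $\tfrac12\|\LLR_\sieve^\star-\LLR\|_{L^1}$ as $n\to\infty$. Theorem~\ref{thm:approx} then sends this floor to $0$ as $\sieve\to\infty$, which gives the iterated-limit statement.
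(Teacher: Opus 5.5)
\textbf{Verdict: there is a genuine gap in your conversion of the score gap into a risk floor.} Your Gaussian half and derived-link half are fine and match how the paper assembles the statement from Proposition~\ref{prop:dichotomy} and Theorems~\ref{thm:clt}, \ref{thm:approx} and~\ref{thm:bayes}. Your strict-propriety argument for $\LLR_1^{\star}=\LLR$ is a correct way to fill in what the paper only asserts. The non-Gaussian score gap is, as you say, just the hypothesis.

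The failing step is ``where $\LLR_1^{\star}\neq\LLR$ near the boundary, continuity gives a positive-measure region where the signs disagree.'' This is false: two scores can differ on every neighbourhood of a common zero set and still induce the same classifier. The configuration the paper flags right after the corollary is a counterexample. Take $d\ge2$, a Student-$t$ generator, $\Sig_0=\Sig_1=I$, $\bmu_1=-\bmu_0=\bmu\neq\bm{0}$ and $\pi_0=\pi_1$.
\begin{itemize}
\item The reflection through $\bmu^{\perp}$ swaps $\Dzero$ with $\Done$ and class $0$ with class $1$. The logistic risk is therefore invariant under $(\beta_0,\beta_1,\beta_2)\mapsto(-\beta_0,-\beta_2,-\beta_1)$.
\item Uniqueness in (A2) then forces $\LLR_1^{\star}=\beta(\Dzero-\Done)=4\beta\,\bx^{\T}\bmu$ with $\beta>0$. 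Its zero set is exactly the Bayes boundary $\{\bx^{\T}\bmu=0\}$.
\item Write $\bx=s\,\bmu/\|\bmu\|+\bm{y}$ with $\bm{y}\perp\bmu$. Then $\LLR=-4s\|\bmu\|\,\genr'(\|\bm{y}\|^2+\|\bmu\|^2)+O(s^2)$, which is not proportional to $\LLR_1^{\star}$ because $\genr'$ is non-constant.
\end{itemize}
So the corollary's hypothesis holds and $\|\LLR_1^{\star}-\LLR\|>0$, yet $\E\bigl[|2\eta-1|\,\mathbf{1}\{\sign\LLR_1^{\star}\neq\sign\LLR\}\bigr]=0$.

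No margin condition repairs this. The condition $\Prob(0<|\LLR|\le u)\le Cu^{\alpha}$ bounds boundary mass from above and only feeds upper bounds. Conversely, once you have a sign-disagreement set of positive measure, the floor is positive with no margin condition at all, since $|2\eta-1|=\tanh(|\LLR|/2)>0$ off the null set $\{\LLR=0\}$.

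What is actually needed is a decision-region hypothesis, $P_{\bx}(\sign\LLR_1^{\star}\neq\sign\LLR)>0$, not a score hypothesis. The paper does not derive it; it builds it into the statement as ``the usual margin condition converting this score gap into excess risk.'' You should either invoke that as an assumption or state the boundary-mismatch condition explicitly. With it, your identity and your dominated-convergence transfer to $\sign\LLRhat_1$ (using $\Prob(\LLR_1^{\star}=0)=0$) finish the argument.

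Some smaller points.
\begin{itemize}
\item \emph{Non-vacuity sketch.} It is wrong as stated: distinct $(\bmu_c,\Sig_c)$ do not guarantee a curve on which $\Dzero$ is fixed while $\Done$ varies. Taking $\bmu_0=\bmu_1$ and $\Sig_1=c\,\Sig_0$ gives $\Done=\Dzero/c$. For the Kotz-type generator $\genr(t)=(N-1)\log t-rt$ (with $N\neq1$, $N>1-d/2$, $r>0$; analytic, non-affine, normalizable), this makes $\LLR$ exactly affine in $\Dzero$. So the gap hypothesis genuinely restricts the class parameters and is not implied by analyticity. This is harmless only because the corollary assumes the gap.
\item \emph{$L^1$ convergence.} $\LLRhat_1\to\LLR_1^{\star}$ in $L^1$ needs the integrable envelope assumed in Theorem~\ref{thm:bayes}, not Theorem~\ref{thm:clt} alone.
\item \emph{Which head is ``QDA''.} Calling $\LLR_1^{\star}$ ``the limiting QDA score'' is accurate only for the $\sieve=1$ logistic head. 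Off the Gaussian carrier, textbook generative QDA converges to a different affine-in-radii score, with slopes $\pm d/(2\E T_c)$ fixed by moments rather than by logistic risk. The floor you transfer is therefore that head's floor.
\end{itemize}
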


The \emph{magnitude} of the QDA gap is the $P_{\bx}$-weighted curvature of
$\genr$ near the decision boundary, and is controlled by two structural levers.
\emph{(i) Covariance heterogeneity}: with equal class covariances and a pure
location shift, the boundary lies where $\Done\approx\Dzero$, so
$\genr(\Done)-\genr(\Dzero)$ is first-order linear there and QDA is near-optimal
(gap $\approx0$); covariance contrast breaks the cancellation and opens the gap.
\emph{(ii) Tail-heaviness}: for Student-$t$, $\genr''(t)=\tfrac{\nu+d}{2}\nu^{-2}
(1+t/\nu)^{-2}$ rises as $\nu\downarrow$ near $t=0$ but is non-monotone at large
$t$, so monotonicity of the \emph{integrated} gap is not proven --- it is an
empirical regularity (Section~\ref{sec:exp}). We therefore state ``the
derived-link advantage grows with covariance heterogeneity and with
tail-heaviness, and vanishes toward the Gaussian limit'' as a heuristic
consistent with, but not implied by, the curvature picture. This is the adaptive
behaviour: the method matches QDA where QDA is right and improves on it in the
tested regimes where the generator departs from Gaussian.

\section{Experiments}
\label{sec:exp}

We report five pre-specified gates (analysis plans fixed before the runs; we use
``pre-specified'' rather than ``pre-registered'' as there is no public registry):
the identity link fails and the fractional basis captures the link (\S\ref{sec:exp-id}); the derived link versus the fitted
Ghosh Mahalanobis-GAM on real data (\S\ref{sec:exp-ghosh}); a genuinely
heavy-tailed real dataset and the adaptivity curve (\S\ref{sec:exp-heavy}); and
a rate simulation against a closed-form ground truth (\S\ref{sec:exp-rate}). All
randomized runs use seed $2026$ and paired bootstrap with $R=2000$.

\subsection{Setup}
\label{sec:exp-setup}
\emph{Heads.} \texttt{qda}: textbook quadratic discriminant. \texttt{identity}:
logistic regression linear in the global Mahalanobis radii (identity link).
\texttt{kunchenko}: logistic regression on the fractional/PATP radial basis of
the global radii (the derived link). \texttt{ghosh}: the global
Mahalanobis-distance GAM of \citet{ghosh2025} --- per-class \emph{unsquared}
Mahalanobis distances $\delta_c=\sqrt{\Dsq{c}}$ fed to a \emph{penalized-spline}
generalized additive model with a logistic link, the smoothing parameters chosen
by REML (following Wood, 2017). We re-implement it with the \texttt{mgcv} package;
for the three-class \texttt{wine} set we fit a genuine multinomial spline GAM with
fixed-df smooths (the penalized-REML multinomial fit being numerically unstable on
these well-separated classes).
This is the elliptic-class classifier of Ghosh et al.\ (their Theorem~1, the
global Mahalanobis-GAM); their \emph{local} kernel-Mahalanobis extension, whose
bandwidth is bootstrap-selected, targets non-elliptic and multimodal classes and
is outside the unimodal-elliptical scope of this comparison --- a faithful
reproduction of that variant is left to future work. The
\texttt{kunchenko}/\texttt{ghosh} comparison is at \emph{equal input budget}:
both consume the same per-class Mahalanobis radii. The contrast is
\emph{derived versus fitted} link --- the \texttt{kunchenko} head expands the
radii in a closed-form fractional-power basis $p\in\{1,0.5,1.5\}$ and fits only
linear coefficients, whereas the \texttt{ghosh} head fits a penalized spline of
each radius with data-chosen smoothness; only the way the link is obtained differs.
\emph{Protocol.} Real-data benchmarks use $5\times5$ repeated stratified
cross-validation with per-fold fitting and standardization (leakage-safe);
estimators are compared by paired bootstrap confidence intervals (CIs) of the
accuracy difference.

\subsection{The identity link fails for elliptical carriers}
\label{sec:exp-id}
We first confirm the structural prediction of Proposition~\ref{prop:dichotomy}
on synthetic elliptical Student-$t$ carriers. Table~\ref{tab:identity} reports
the coefficient of determination $R^2_{\mathrm{lin}}$ of the best affine
(identity-link) fit to the true radial log-density, and the captured fraction
$\kappa_{\sieve}$ of the fractional basis. Identity-link adequacy collapses from
$1.000$ (Gaussian) to $0.240$ at $t_4$; the fractional basis recovers it,
$\kappa_2(t_4)=0.95$, while the Gaussian case terminates at $\sieve=1$ (a single
linear term), reproducing the affine-link collapse of
Proposition~\ref{prop:dichotomy}.

\begin{table}[t]
\centering
\caption{Identity-link adequacy $R^2_{\mathrm{lin}}$ by generator, and the
fractional-basis captured fraction $\kappa_{\sieve}$ (gate G-ELL-1/2). The
identity link is exact only for the Gaussian; the fractional basis captures the
non-Gaussian link at low order.}
\label{tab:identity}
\small
\begin{tabular}{@{}lccccc@{}}
\toprule
generator & Gaussian & $t_{30}$ & $t_8$ & $t_4$ & $t_{2.5}$ \\
\midrule
$R^2_{\mathrm{lin}}$ (identity link) & $1.000$ & $0.95$ & $0.66$ & $0.240$ & $0.009$ \\
$\kappa_2$ (fractional basis) & $1.000$ (term.\ $\sieve{=}1$) & --- & --- & $0.95$ & --- \\
\bottomrule
\end{tabular}
\end{table}

\subsection{Derived link versus the fitted Ghosh GAM on real data}
\label{sec:exp-ghosh}
Table~\ref{tab:realdata} compares the heads on two standard elliptical-ish
multivariate benchmarks. At equal input budget the \emph{derived} closed-form link
is not significantly different from the \emph{fitted} penalized-spline GAM
--- a statistical tie on both \texttt{breast\_cancer} ($[-0.005,+0.003]$; means
$0.959$ vs $0.960$) and \texttt{wine} ($[-0.008,+0.019]$; $0.975$ vs $0.970$). The
claim we make is therefore \emph{parity}, not dominance: a link written down in
closed form, with no smoothing-parameter or bandwidth selection, is statistically
indistinguishable from a GAM whose
smoothness is data-tuned. The derived
link also beats the identity link on both sets (CIs exclude $0$). Against textbook
QDA the result is regime-dependent: QDA wins on the light-tailed near-Gaussian
\texttt{wine} ($[-0.027,-0.008]$ for \texttt{kunchenko}$-$\texttt{qda}) and ties on
\texttt{breast\_cancer} --- consistent with Corollary~\ref{cor:dichotomy}, since
these benchmarks are the wrong regime for a link advantage over QDA; the heavy-tail
regime is \S\ref{sec:exp-heavy}. (The non-elliptical $10$-class \texttt{digits}
pixel set, a tie in earlier proxy runs, is off-thesis and omitted from the faithful
head-to-head. The local kernel-Mahalanobis variant of \citet{ghosh2025} targets
multimodal classes and is likewise out of scope here.)

\begin{table}[t]
\centering
\caption{Mean accuracy on real benchmarks ($5\times5$ CV) and paired bootstrap
$95\%$ CIs of accuracy differences (gate G-ELL-3). \texttt{ghosh} is the faithful
global Mahalanobis-GAM of \citet{ghosh2025} (per-class unsquared distances, mgcv
penalized spline, REML; a fixed-df multinomial spline GAM for the three-class
\texttt{wine}). At equal input budget the
\emph{derived} link (\texttt{kunchenko}) is not significantly worse than this
\emph{fitted} GAM while requiring no
smoothing-parameter selection.}
\label{tab:realdata}
\small
\begin{tabular}{@{}lcccc@{}}
\toprule
dataset $(n,d,C)$ & \texttt{qda} & \texttt{identity} & \textbf{\texttt{kunchenko}} & \texttt{ghosh} (fitted) \\
\midrule
wine $(178,13,3)$ & $\mathbf{0.992}$ & $0.965$ & $0.975$ & $0.970$ \\
breast\_cancer $(569,30,2)$ & $\mathbf{0.964}$ & $0.901$ & $0.959$ & $0.960$ \\
\midrule
\multicolumn{5}{@{}l}{\emph{Paired bootstrap $95\%$ CI of $\Delta$accuracy ($>0$ favours the derived link):}}\\
\texttt{kunchenko}$-$\texttt{ghosh} (fitted) & \multicolumn{2}{l}{wine $[-0.008,+0.019]$} & \multicolumn{2}{l}{bc $[-0.005,+0.003]$}\\
\texttt{kunchenko}$-$\texttt{identity} & \multicolumn{2}{l}{wine $[+0.004,+0.017]$} & \multicolumn{2}{l}{bc $[+0.047,+0.069]$}\\
\texttt{kunchenko}$-$\texttt{qda} & \multicolumn{2}{l}{wine $[-0.027,-0.008]$} & \multicolumn{2}{l}{bc $[-0.011,+0.000]$}\\
\bottomrule
\end{tabular}
\end{table}

\subsection{Genuinely heavy-tailed real data, and adaptivity}
\label{sec:exp-heavy}
The standard benchmarks are light-tailed. We therefore test on six daily
financial series spanning three asset classes (FRED): three exchange rates
(CAD/USD \texttt{DEXCAUS}, JPY/USD \texttt{DEXJPUS}, GBP/USD \texttt{DEXUSUK}),
crude oil (\texttt{DCOILWTICO}), an equity index (S\&P~500, \texttt{SP500}), and
--- as a light-tailed control --- EUR/USD (\texttt{DEXUSEU}). For each, the
features are $d=5$ log-return embeddings and the binary label is the top vs bottom
tercile of a $21$-day trailing realized-volatility window (middle dropped). Excess
kurtosis ranges from $2.5$ (the EUR/USD control) through $6.9$--$9.3$ (the FX
rates) to $16.8$ (S\&P~500) and $64.9$ (oil).

Both the embeddings and the trailing-volatility label are temporally
autocorrelated, so an i.i.d.\ random-CV bootstrap understates the confidence
intervals. We validate each series leakage-free with a purged-and-embargoed
blocked $6$-fold (train-only standardization, an embargo of $26$ samples, isolating
adjacency leakage while training on all regimes) and a moving-block bootstrap
(block $21$) on the pooled out-of-sample paired accuracy; for CAD/USD a
forward-chaining rolling-origin check agrees (gate G-ELL-4B). Table~\ref{tab:fx}
reports the result. Two findings hold \emph{as a pattern}, not a single point.
\emph{(i)} At equal budget the derived link is \emph{never significantly worse}
than the fitted Ghosh MD-GAM on any series (significantly worse on $0$ of $5$
heavy-tailed series and on the control), tying it on four of the five and
significantly \emph{better} on one (CAD/USD) --- so the closed-form derived link
\emph{has no significant loss relative to} the tuned fitted GAM across FX, commodity, and equity, without any
smoothing selection. \emph{(ii)} The derived link significantly beats QDA on the
heaviest-tailed series (oil, S\&P~500, JPY/USD) and ties on the rest, the advantage
\emph{tracking tail-heaviness} and vanishing on the light-tailed EUR/USD control ---
the adaptivity thesis, on real data, and the qualitatively strongest real-data
result. (An i.i.d.\ random-CV bootstrap inflates the QDA edge through temporal
leakage; we report only the leakage-free intervals.) The derived-vs-fitted margins
are small in absolute terms --- as expected between two flexible radial links on the
same inputs --- so the honest headline here is parity-at-lower-tuning-cost against
the fitted GAM, and a genuine, tail-tracking advantage against QDA.

\begin{table}[t]
\centering
\caption{Heavy-tailed real data across asset classes (purged-and-embargoed blocked
$6$-fold, moving-block bootstrap; gate G-ELL-4C). The fitted comparator is the
faithful global Mahalanobis-GAM of \citet{ghosh2025} (mgcv penalized spline, REML).
Moving-block $95\%$ CI of $\Delta$accuracy ($>0$ favours the derived link);
\textbf{bold} excludes $0$. EUR/USD is a light-tailed control. The derived link is
never significantly worse than the fitted GAM (matching it on all but CAD/USD), and
beats QDA where the tails are heaviest.}
\label{tab:fx}
\small
\begin{tabular}{@{}llrrcc@{}}
\toprule
series & class & $n$ & exc.\ kurt.\ & \texttt{kun}$-$\texttt{ghosh} (fitted) & \texttt{kun}$-$\texttt{qda} \\
\midrule
CAD/USD & FX & $9256$ & $9.3$ & $\mathbf{[+0.004,+0.009]}$ & $[-0.010,+0.017]$ \\
JPY/USD & FX & $9250$ & $9.1$ & $[-0.003,+0.001]$ & $\mathbf{[+0.009,+0.047]}$ \\
GBP/USD & FX & $9254$ & $6.9$ & $[-0.004,+0.002]$ & $[-0.008,+0.022]$ \\
WTI oil & comm.\ & $6772$ & $64.9$ & $[-0.003,+0.007]$ & $\mathbf{[+0.023,+0.069]}$ \\
S\&P 500 & equity & $1658$ & $16.8$ & $[-0.025,+0.007]$ & $\mathbf{[+0.039,+0.125]}$ \\
\midrule
EUR/USD & FX\,(light) & $4574$ & $2.5$ & $[-0.001,+0.003]$ & $[-0.016,+0.013]$ \\
\bottomrule
\end{tabular}
\end{table}

\paragraph{Are the real embeddings elliptical?} The Bayes analysis of
\S\ref{sec:theory} assumes each class is elliptical, so we test it on the very $d=5$
return embeddings the classifiers consume, per class and pooled, with three
independent probes (seed $2026$). \emph{(i)} Mardia multivariate skewness: elliptical
symmetry forces the population third-order term to zero, so a significant value is
direct evidence against ellipticity. \emph{(ii)} Mardia multivariate kurtosis:
reported for context only --- heavy tails inflate it even for a bona fide
elliptical-$t$, so a rejection here is expected. \emph{(iii)} Directional-kurtosis
homogeneity: for an elliptical law every whitened one-dimensional projection shares
the same kurtosis; we compare the spread over $400$ random directions against a
matched multivariate-$t$ (elliptical) null by parametric bootstrap. The verdict is
uniform and informative: Mardia \emph{skewness} rejects ellipticity in all $18$
(series~$\times$~\{pooled, class~0, class~1\}) cells at $p<10^{-4}$ --- the embeddings
are significantly \emph{asymmetric} --- while the directional-kurtosis test does
\emph{not} reject anywhere ($p>0.06$). The real series therefore violate the
elliptical assumption specifically through a nonzero \emph{third} moment; the
directional-kurtosis result gives no evidence against, but does not prove,
elliptical-$t$-like radial tail homogeneity. We accordingly read Table~\ref{tab:fx} not as a confirmation
of an elliptical data-generating process but as a \emph{robustness} result: the
derived link keeps its equal-budget parity with the fitted GAM and its edge over QDA
under empirically verified departures from elliptical symmetry. Modelling the
residual asymmetry (a skew-elliptical or signed radial link) is left to future work.

The adaptivity claim is made precise by an experiment on \emph{real} covariance
structure: per-class $(\bmu_c,\Sig_c)$ are estimated from \texttt{breast\_cancer}
and classes are then drawn as multivariate-$t$ with those parameters and shrinking
$\nu$ (covariance held $\approx\Sig_c$, so only the tail changes;
Figure~\ref{fig:adapt-a}). The \texttt{kunchenko}$-$\texttt{qda} bootstrap CI
is positive at every $\nu$ and widens monotonically as the tails become heavier,
from $[+0.007,+0.012]$ at $\nu=\infty$ (Gaussian) to $[+0.027,+0.041]$ at $\nu=3$:
QDA degrades while the derived link holds. The small positive advantage at
$\nu=\infty$, where QDA is correctly specified, is finite-sample: it reflects the
plug-in covariance estimation that both heads share, not an approximation gap, and
it shrinks with $n$. The regime-dependence is thus adaptivity along a continuous
curve: the advantage tracks the departure from Gaussianity.

\begin{center}
\includegraphics[width=0.9\linewidth]{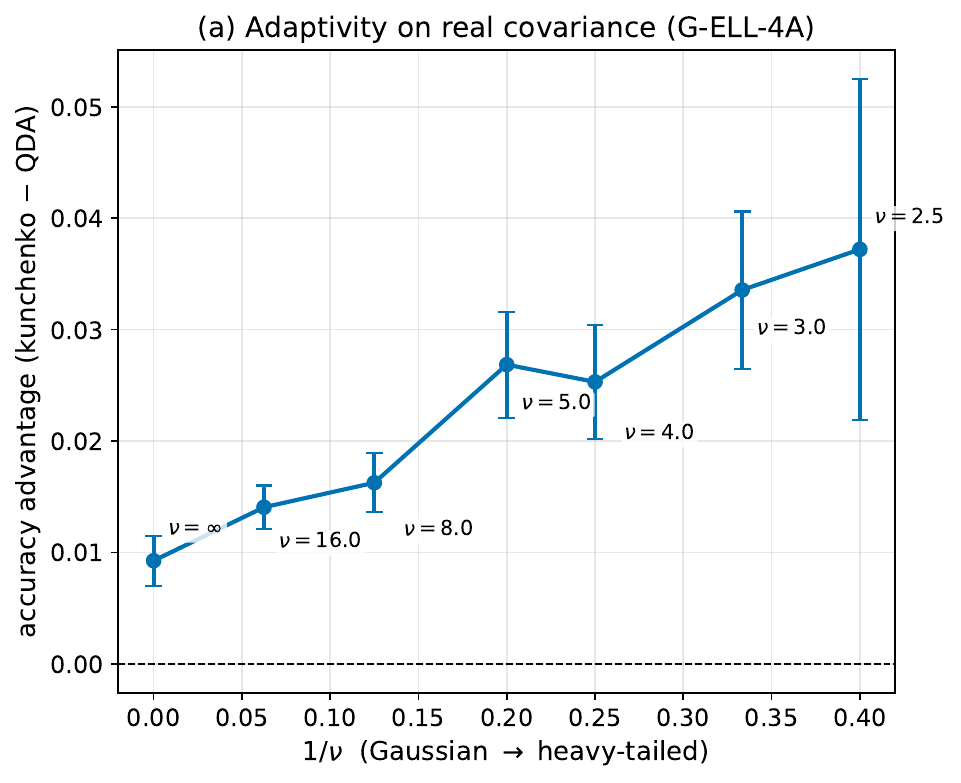}\par
\refstepcounter{figure}\label{fig:adapt-a}
\small\textbf{Figure~\thefigure.} Adaptivity --- the \texttt{kunchenko}$-$\texttt{qda}
accuracy advantage on real covariance structure widens as $\nu\downarrow$ (gate
G-ELL-4A).
\end{center}

\subsection{Rate simulation against a closed-form ground truth}
\label{sec:exp-rate}
We verify the estimation theory where $\genr$, $\LLR$ and $\Rstar$ are all
closed-form: two-class Student-$t$ with $\bx\mid0\sim t_\nu(\bm0,I)$,
$\bx\mid1\sim t_\nu(\bmu_1,a\,I)$ in $\reals^d$, the scale ratio $a$ being a
covariance-contrast knob. Every regime below sits inside the formal scope
($\nu=8>6$ for the full basis; $\nu>4$ for the plug-in covariance), except the
explicitly illustrative tail $\nu<6$.

\emph{(A) $\sqrt{n}$ rate (Theorem~\ref{thm:clt}).} With true whitening and a
fixed feature scaler, the estimator error $\|\widehat{\bbeta}_n-\bbeta^{\star}\|$
decays with log-log slope $-0.57$ (theory: $-0.5$) over $n\in[10^3,3.2\!\times\!10^4]$
--- consistent with the parametric rate.

\emph{(B) Excess-risk dichotomy (Corollary~\ref{cor:dichotomy}).} At $\nu=8$,
$a=3$ (in scope), QDA's excess risk plateaus at a constant $\approx0.018$ (its
affine link cannot represent the logarithmic $t$-link) while the derived link's
excess risk vanishes, $0.029\to0.0005$ as $n$ grows --- the dichotomy, in scope.

\emph{(B$'$) Sieve closes the gap (Theorem~\ref{thm:approx}).} Holding $n$ fixed
($n=6400$, $\nu=3$, $a=3$) and growing the sieve dimension $\sieve$, the
derived link's excess risk falls monotonically as more fractional powers are
added: $\sieve=1$ (affine) $0.052\to\sieve=2\;0.023\to\sieve=3\;0.018\to
\sieve=4\;0.017\to\sieve=6\;0.014$. The approximation floor shrinks with
$\sieve$, the empirical counterpart of Theorem~\ref{thm:approx}.

\emph{(C) The two levers.} Figure~\ref{fig:adapt-bc} isolates them at fixed
$n$: the QDA gap is $\approx0$ for equal covariance ($a=1$), opens with
covariance contrast (peaking near $a{=}2$--$3$), and grows monotonically as the
tails become heavier (gap $0.008$ at $\nu{=}12$ to $0.094$ at $\nu{=}3$, $a{=}3$). This
is the mechanism behind the real-data results: the advantage requires
covariance heterogeneity and/or heavy tails, which is why the real per-class-$\Sig$
benchmarks show it and an equal-covariance toy hides it.

\begin{center}
\includegraphics[width=\textwidth]{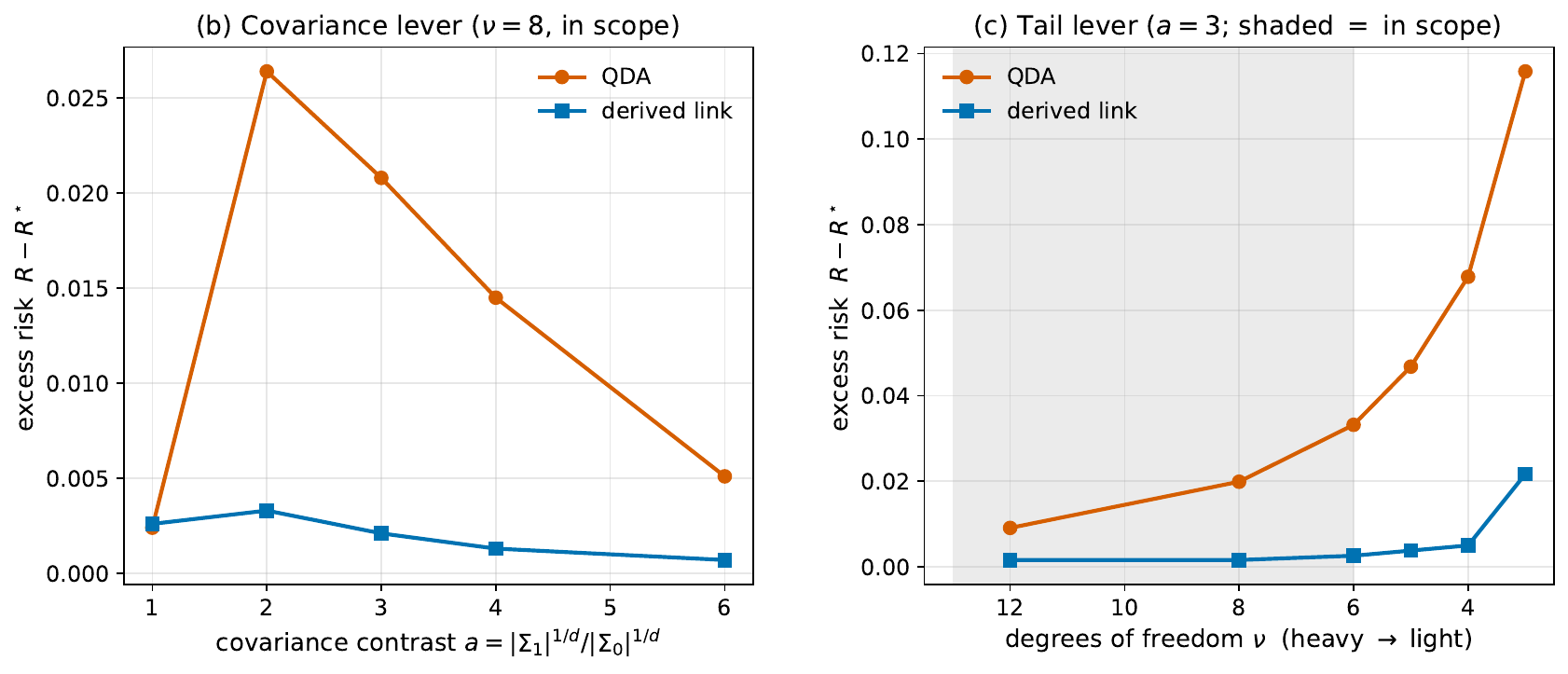}\par
\refstepcounter{figure}\label{fig:adapt-bc}
\small\textbf{Figure~\thefigure.} The two levers of the excess-risk gap against
a closed-form ground truth (gate G-ELL-5C): QDA excess vs the derived link as a
function of covariance contrast $a$ (at $\nu=8$, panel b) and of tail-heaviness
$\nu$ (at $a=3$, panel c). The gap vanishes at equal covariance and toward the
Gaussian limit.
\end{center}

\section{Discussion and limitations}
\label{sec:discussion}

\paragraph{What the paper establishes.} For elliptical classes the Bayes rule is
an additive model in the Mahalanobis radii whose link is the radial generator
(Proposition~\ref{prop:bridge}); the identity link is adequate exactly under
Gaussianity (Proposition~\ref{prop:dichotomy}); the link is identifiable
(Lemma~\ref{lem:ident}), the fractional-sieve plug-in estimator is
$\sqrt{n}$-consistent (Theorem~\ref{thm:clt}), and the induced classifier is
asymptotically Bayes-optimal in the iterated limit (Theorem~\ref{thm:bayes}).
Empirically the derived closed-form link has no significant loss relative to the
fitted Ghosh Mahalanobis-GAM at equal budget --- never significantly worse across
two UCI benchmarks and six financial series, with no smoothing-parameter or
bandwidth selection --- and it beats Gaussian QDA on
genuinely heavy-tailed real data, the advantage tracking tail-heaviness.

\paragraph{Scope and caveats.}
\emph{(i) Iterated vs simultaneous limit.} Theorem~\ref{thm:bayes} is the
iterated limit; a simultaneous $\sieve_n\to\infty$ rate needs eigenvalue control
on the Hessian against the near-collinearity of high powers, which we do not
prove. We conjecture that $\sieve_n=o(n^{1/2})$ with a Tikhonov-regularized
Hessian is sufficient, but leave it open. \emph{(ii) Moments.} The $\sqrt{n}$ theory needs $\nu>6$ for the full
basis and $\nu>4$ for the sub-basis and the plug-in covariance; below $\nu=4$ the
sample covariance is no longer $\sqrt{n}$ and the asymptotics do not apply ---
the moment-free, empirical-characteristic-function route
\citep{feuerverger1990,zabolotnii2026cf} is the proper tool there, and is left to
future work.
\emph{(iii) The equal-covariance degeneracy} (Corollary~\ref{cor:dichotomy},
gate G-ELL-5C) is informative, not a defect: it explains why a location-shift toy
hides the advantage and the real per-class-$\Sig$ data shows it, and it bounds the
claim --- the derived link is not universally better than QDA, only where the
generator's curvature bites. \emph{(iv) High dimension --- a moderate-$d$ scope.}
Our benchmarks have $d\le64$, and the practical advantage is one. We probed the
$d\gg n$ regime directly (gate G-ELL-6: ridge-regularized per-class scatter,
$\nu=8$, $a=3$), and there the derived link's advantage does not survive. As $d/n$
grows past one the global Mahalanobis radii --- read off a rank-deficient,
ridge-loaded scatter --- saturate, and the \emph{fixed} fractional-power basis
collapses to the identity link ($d/n=5$: both at chance, $0.50$), while a
\emph{data-adaptive} quantile-spline link still separates ($0.63$);
normalizing the radius by $d$ does not rescue it. The structural and
identifiability results (Propositions~\ref{prop:bridge}--\ref{prop:dichotomy},
Lemma~\ref{lem:ident}) are dimension-free and unaffected --- they are algebraic
identities in the radii --- but the \emph{practical} derived-link benefit rests on
well-estimated radii, hence on $n\gg d$. Recovering it at $d\gg n$ needs a
high-dimensional scatter (regularized or robust, not the raw sample covariance)
and a data-adaptive or moment-free radius basis; we leave this to future work.
\emph{(v) Misspecified generator.} We assume a known
parametric family for the rate simulation; robustness when the data generator
differs from the fitted family is a natural next gate --- and, on the real
financial series, the elliptical assumption is in fact rejected through asymmetry
(\S\ref{sec:exp-heavy}), so those results already stand as evidence of robustness to
mild non-ellipticity rather than as confirmations of it. \emph{(vi) Comparator
scope.} We compare against the \emph{global} Mahalanobis-GAM of \citet{ghosh2025}
(their elliptic-class Theorem~1); their \emph{local} kernel-Mahalanobis extension,
built for multimodal and non-elliptic classes with a bootstrap-selected bandwidth, is
outside the unimodal-elliptical regime studied here, and a faithful reproduction of
it --- together with the non-elliptical \texttt{digits} pixel benchmark --- is left
to future work.

\paragraph{Where the theory proves and where the experiments extend.} It is worth
being explicit about the boundary between the proven regime and the empirical one,
since the two do not coincide (Table~\ref{tab:regimemap}). Mapping each real series to
a tail index $\nu$ through the univariate excess kurtosis $\kappa_e=6/(\nu-4)$ (the
multivariate kurtosis is larger, so these are upper bounds on $\nu$), the strongest
real advantages over QDA --- oil, S\&P~500, JPY/USD --- sit at $\nu\in(4,4.7)$, inside
the $p\le1$ sub-basis band but below the $\nu>6$ full-basis scope, with oil essentially
at the $\nu=4$ boundary where even $\widehat{\Sig}$ ceases to be $\sqrt{n}$. There the
$p=1.5$ term is retained as a regularized finite-sample feature outside the formal
CLT, and the moment-free characteristic-function route \citep{zabolotnii2026cf} is the
proper tool below $\nu=4$. We present these cells as an \emph{empirical extension} of
the proven core, not as covered by the theorems; the proven statement is the
dichotomy (Corollary~\ref{cor:dichotomy}) plus the in-scope rate simulation.

\begin{table}[t]
\centering
\caption{Regime map: where the theory \emph{proves} and where the experiments
\emph{extend}. Tail index $\nu$ for the real series is an upper bound from the
univariate excess kurtosis via $\kappa_e=6/(\nu-4)$. Scope thresholds are the moment
conditions of \S\ref{sec:theory} (full basis needs $\nu>6$; the $p\le1$ sub-basis and
the $\sqrt{n}$ plug-in covariance need $\nu>4$).}
\label{tab:regimemap}
\small
\resizebox{\textwidth}{!}{%
\begin{tabular}{@{}llll@{}}
\toprule
Regime ($\nu$) & CLT theory covers & Empirical finding & Real series in band \\
\midrule
Gaussian / $\nu\gtrsim30$ & full basis (terminates at $\sieve{=}1$) & derived ties QDA (Cor.~\ref{cor:dichotomy}) & wine (light UCI) \\
$\nu>6$ & \textbf{full} basis $\{1,\tfrac12,\tfrac32\}$ & derived $\to$ Bayes; QDA gap flat & EUR/USD ($\nu\!\approx\!6.4$) \\
$4<\nu\le6$ & \textbf{sub-basis} $p\le1$ only & derived $\ge$ fitted; beats QDA & GBP, JPY, CAD, S\&P ($\nu\!\approx\!4.4$--$4.9$) \\
$\nu\le4$ & \textbf{none} ($\widehat{\Sig}$ not $\sqrt{n}$) & win vs QDA empirical; CF route proper & WTI oil ($\nu\!\approx\!4.1$, boundary) \\
\bottomrule
\end{tabular}}
\end{table}

\paragraph{Beyond the fourth-moment assumption.} The condition $\nu>4$ is
sufficient, not necessary, and enters only through the plug-in whitening: the
radial-link \emph{structure} (Proposition~\ref{prop:bridge}) and the
identifiability of $\genr$ (Lemma~\ref{lem:ident}) hold for any generator. What
the $\sqrt{n}$ theory needs is a $\sqrt{n}$-consistent affine-equivariant
scatter, and the sample covariance is the wrong tool once $\nu\le4$. Replacing
it with a robust scatter estimator --- Tyler's distribution-free $M$-estimator
\citep{tyler1987}, or the complex-elliptically-symmetric framework of
\citet{ollila2012} --- restores $\sqrt{n}$ whitening under only finite second
moments (a substitution already anticipated in the Remark to
\citet[Lemma~1]{ghosh2025}), while the radial features themselves remain valid for powers $p<\nu/4$;
a moment-free empirical-characteristic-function radial link
(in the spirit of \citealp{zabolotnii2026cf}) reaches the infinite-variance regime. The derived-link
machinery is thus modular in the scatter plug-in, and the heavy-tailed
applications that motivate it (e.g.\ the FX returns of \S\ref{sec:exp-heavy})
are accessible by this substitution without altering the structure or the
identifiability argument.

\paragraph{Relation to the Gaussian unification.} The structural layer
(\S\ref{sec:framework}) extends a Gaussian likelihood-ratio-projection
unification \citep{zabolotnii2026paper4} that is, in the Gaussian carrier alone,
a tautology. This paper is the elliptical extension, not a restatement: the link
becomes non-identity, the comparison with a fitted GAM becomes meaningful, and
the estimation theory (\S\ref{sec:theory}) --- absent from the Gaussian anchor
--- is what gives the result depth.

\section{Conclusion}
\label{sec:conclusion}

The likelihood ratio of two elliptical classes is an additive model in the two
Mahalanobis radii whose link is the radial generator. The Gaussian carrier is
the single degenerate case where that link is the identity and the model is
quadratic discriminant analysis; everywhere else the link is non-identity and
can be \emph{derived} from the generator rather than fitted nonparametrically. We proved that the derived link is identifiable,
$\sqrt{n}$-consistent, and asymptotically Bayes-optimal, and certified the
structural algebra in Lean~4. Empirically, the derived closed-form link has no
significant loss relative to the fitted global Mahalanobis-GAM of Ghosh et al.\
at equal budget --- never significantly worse, with no smoothing-parameter
selection --- and across six heavy-tailed financial series spanning three asset
classes it is never significantly worse than that fitted link while beating QDA on
the heaviest-tailed series under temporal-dependence-robust validation, the
advantage over QDA tracking tail-heaviness and vanishing on a light-tailed
control. The behaviour is adaptive
--- improving on QDA exactly as the generator departs from Gaussian and matching
it otherwise.

Three extensions follow directly. A moment-free (empirical-characteristic-function)
radial link would carry the method past the $\nu>4$ moment boundary into the
infinite-variance regime. A uniform-in-sieve rate would upgrade the iterated-limit
optimality to a simultaneous $\sieve_n\to\infty$ statement. And a high-dimensional
($d\gg n$) treatment with a regularized per-class covariance would broaden the
benchmark beyond the moderate dimensions tested here. Each builds on the same
organizing fact: in the elliptical world, the link is not a modelling choice but
a known, estimable functional of the data.

\section*{Reproducibility}
The estimators, the gate scripts, the data-processing and the closed-form rate
simulation, together with the machine-checked Lean~4 development, are available
as a standalone public repository at
\url{https://github.com/SZabolotnii/DSGE-MV_Elliptical_LLR-code-supplement}. It
provides the run scripts (\texttt{code/}), the recorded gate outputs
(\texttt{results/}), the figure-generation script (\texttt{figures/}), the
\texttt{sorry}-free \texttt{EllipticalUnification.lean} (\texttt{lean/}) and the
per-gate verdicts (\texttt{docs/}); \texttt{RUN\_ALL.md} documents the end-to-end
pipeline and \texttt{requirements.txt} the environment. All randomized
experiments use seed $2026$.

\section*{Ethical considerations}
This study uses only public benchmark datasets and public financial time series;
it does not use private, clinical, human-subject, or personally identifiable
data. The financial-series experiments are statistical stress tests of
heavy-tailed regimes, not trading or investment advice. The computations are
lightweight repeated-CV simulations and formal checks; no large-scale model
training or human-subject intervention is involved.

\section*{CRediT authorship contribution statement}
\textbf{Serhii Zabolotnii:} Conceptualization, Methodology, Formal analysis,
Software, Investigation, Validation, Writing -- original draft, Writing -- review
\& editing.

\section*{Declaration of competing interest}
The author declares no competing financial interests or personal relationships
that could have appeared to influence the work reported in this paper.

\section*{Funding}
This research received no specific grant from any funding agency in the public,
commercial, or not-for-profit sectors.

\section*{Data availability}
All data used are public. The benchmark datasets (\texttt{wine} and
\texttt{breast\_cancer}) are distributed with
\texttt{scikit-learn} and originate from the UCI repository; the financial series
are publicly available FRED daily series (\texttt{DEXCAUS}, \texttt{DEXJPUS},
\texttt{DEXUSUK}, \texttt{DEXUSEU}, \texttt{DCOILWTICO}, \texttt{SP500}). All processing scripts, estimators, gates, the closed-form
rate simulation, and the Lean~4 development are released at the repository linked
under Reproducibility.

\section*{Declaration of generative AI and AI-assisted technologies in the manuscript preparation process}
During the preparation of this work the author used AI-assisted coding and
writing environments, including OpenAI Codex and Anthropic Claude, for language
editing, organization of the literature, and scaffolding of the reproducibility
scripts. The author designed the study, derived and verified all theoretical
results (including the \texttt{sorry}-free Lean~4 development), executed and
validated all experiments, and reviewed and edited all content. The author takes
full responsibility for the content of the publication.

\section*{Acknowledgements}
The author thanks the Kunchenko research school for the stochastic-polynomial
apparatus underlying the derived radial link.

\bibliographystyle{cas-model2-names}
\bibliography{references}

@article{ghosh2025,
  author  = {Ghosh, Annesha and Ghosh, Anil K. and SahaRay, Rita and Sarkar, Soham},
  title   = {Classification Using Global and Local {Mahalanobis} Distances},
  journal = {Journal of Multivariate Analysis},
  volume  = {207},
  pages   = {105417},
  year    = {2025},
  doi     = {10.1016/j.jmva.2025.105417}
}

@book{fang1990,
  author    = {Fang, Kai-Tai and Kotz, Samuel and Ng, Kai Wang},
  title     = {Symmetric Multivariate and Related Distributions},
  publisher = {Chapman and Hall},
  address   = {London},
  year      = {1990}
}

@article{cambanis1981,
  author  = {Cambanis, Stamatis and Huang, Steel and Simons, Gordon},
  title   = {On the Theory of Elliptically Contoured Distributions},
  journal = {Journal of Multivariate Analysis},
  volume  = {11},
  number  = {3},
  pages   = {368--385},
  year    = {1981},
  doi     = {10.1016/0047-259X(81)90082-8}
}

@book{hastie1990,
  author    = {Hastie, Trevor J. and Tibshirani, Robert J.},
  title     = {Generalized Additive Models},
  publisher = {Chapman and Hall},
  address   = {London},
  year      = {1990},
  isbn      = {978-0-412-34390-2}
}

@book{mclachlan2004,
  author    = {McLachlan, Geoffrey J.},
  title     = {Discriminant Analysis and Statistical Pattern Recognition},
  publisher = {Wiley},
  address   = {Hoboken, NJ},
  year      = {2004}
}

@book{kunchenko2002,
  author    = {Kunchenko, Yuriy P.},
  title     = {Polynomial Parameter Estimations of Close to {Gaussian} Random Variables},
  publisher = {Shaker Verlag},
  address   = {Aachen},
  year      = {2002}
}

@book{vandervaart1998,
  author    = {van der Vaart, Aad W.},
  title     = {Asymptotic Statistics},
  publisher = {Cambridge University Press},
  address   = {Cambridge},
  year      = {1998}
}

@incollection{newey1994,
  author    = {Newey, Whitney K. and McFadden, Daniel},
  title     = {Large Sample Estimation and Hypothesis Testing},
  booktitle = {Handbook of Econometrics},
  editor    = {Engle, Robert F. and McFadden, Daniel L.},
  volume    = {4},
  pages     = {2111--2245},
  publisher = {Elsevier},
  year      = {1994},
  doi       = {10.1016/S1573-4412(05)80005-4}
}

@book{devroye1996,
  author    = {Devroye, Luc and Gy\"{o}rfi, L\'{a}szl\'{o} and Lugosi, G\'{a}bor},
  title     = {A Probabilistic Theory of Pattern Recognition},
  publisher = {Springer},
  address   = {New York},
  year      = {1996},
  doi       = {10.1007/978-1-4612-0711-5}
}

@article{feuerverger1977,
  author  = {Feuerverger, Andrey and Mureika, Roman A.},
  title   = {The Empirical Characteristic Function and Its Applications},
  journal = {The Annals of Statistics},
  volume  = {5},
  number  = {1},
  pages   = {88--97},
  year    = {1977},
  doi     = {10.1214/aos/1176343742}
}

@article{feuerverger1990,
  author  = {Feuerverger, Andrey},
  title   = {An Efficiency Result for the Empirical Characteristic Function in Stationary Time-Series Models},
  journal = {The Canadian Journal of Statistics},
  volume  = {18},
  number  = {2},
  pages   = {155--161},
  year    = {1990},
  doi     = {10.2307/3315564}
}

@article{tyler1987,
  author  = {Tyler, David E.},
  title   = {A Distribution-Free {M}-Estimator of Multivariate Scatter},
  journal = {The Annals of Statistics},
  volume  = {15},
  number  = {1},
  pages   = {234--251},
  year    = {1987},
  doi     = {10.1214/aos/1176350263}
}

@article{ollila2012,
  author  = {Ollila, Esa and Tyler, David E. and Koivunen, Visa and Poor, H. Vincent},
  title   = {Complex Elliptically Symmetric Distributions: Survey, New Results and Applications},
  journal = {IEEE Transactions on Signal Processing},
  volume  = {60},
  number  = {11},
  pages   = {5597--5625},
  year    = {2012},
  doi     = {10.1109/TSP.2012.2212433}
}

@article{fishbone2024,
  author  = {Fishbone, Justin and Mili, Lamine},
  title   = {New Highly Efficient High-Breakdown Estimator of Multivariate Scatter and Location for Elliptical Distributions},
  journal = {The Canadian Journal of Statistics},
  volume  = {52},
  number  = {2},
  pages   = {437--460},
  year    = {2024},
  doi     = {10.1002/cjs.11770}
}

@article{li2012,
  author  = {Li, Jun and Cuesta-Albertos, Juan A. and Liu, Regina Y.},
  title   = {{DD}-Classifier: Nonparametric Classification Procedure Based on {DD}-Plot},
  journal = {Journal of the American Statistical Association},
  volume  = {107},
  number  = {498},
  pages   = {737--753},
  year    = {2012},
  doi     = {10.1080/01621459.2012.688462}
}

@article{ichimura1993,
  author  = {Ichimura, Hidehiko},
  title   = {Semiparametric Least Squares ({SLS}) and Weighted {SLS} Estimation of Single-Index Models},
  journal = {Journal of Econometrics},
  volume  = {58},
  number  = {1--2},
  pages   = {71--120},
  year    = {1993},
  doi     = {10.1016/0304-4076(93)90114-K}
}

@article{hardle1993,
  author  = {H\"{a}rdle, Wolfgang and Hall, Peter and Ichimura, Hidehiko},
  title   = {Optimal Smoothing in Single-Index Models},
  journal = {The Annals of Statistics},
  volume  = {21},
  number  = {1},
  pages   = {157--178},
  year    = {1993},
  doi     = {10.1214/aos/1176349020}
}

@article{ghosh2005depth,
  author  = {Ghosh, Anil K. and Chaudhuri, Probal},
  title   = {On Maximum Depth and Related Classifiers},
  journal = {Scandinavian Journal of Statistics},
  volume  = {32},
  number  = {2},
  pages   = {327--350},
  year    = {2005},
  doi     = {10.1111/j.1467-9469.2005.00423.x}
}

@article{hubert2017,
  author  = {Hubert, Mia and Rousseeuw, Peter J. and Segaert, Pieter},
  title   = {Multivariate and Functional Classification Using Depth and Distance},
  journal = {Advances in Data Analysis and Classification},
  volume  = {11},
  number  = {3},
  pages   = {445--466},
  year    = {2017},
  doi     = {10.1007/s11634-016-0269-3}
}

@article{peel2000,
  author  = {Peel, David and McLachlan, Geoffrey J.},
  title   = {Robust Mixture Modelling Using the $t$ Distribution},
  journal = {Statistics and Computing},
  volume  = {10},
  number  = {4},
  pages   = {339--348},
  year    = {2000},
  doi     = {10.1023/A:1008981510081}
}

@article{hastie1994,
  author  = {Hastie, Trevor and Tibshirani, Robert and Buja, Andreas},
  title   = {Flexible Discriminant Analysis by Optimal Scoring},
  journal = {Journal of the American Statistical Association},
  volume  = {89},
  number  = {428},
  pages   = {1255--1270},
  year    = {1994},
  doi     = {10.1080/01621459.1994.10476866}
}

@incollection{chen2007,
  author    = {Chen, Xiaohong},
  title     = {Large Sample Sieve Estimation of Semi-Nonparametric Models},
  booktitle = {Handbook of Econometrics},
  editor    = {Heckman, James J. and Leamer, Edward E.},
  volume    = {6B},
  pages     = {5549--5632},
  publisher = {Elsevier},
  year      = {2007},
  doi       = {10.1016/S1573-4412(07)06076-X}
}

@article{pagan1984,
  author  = {Pagan, Adrian},
  title   = {Econometric Issues in the Analysis of Regressions with Generated Regressors},
  journal = {International Economic Review},
  volume  = {25},
  number  = {1},
  pages   = {221--247},
  year    = {1984},
  doi     = {10.2307/2648877}
}

@misc{zhang2026lean,
  author       = {Zhang, Yuanhe and Lee, Jason D. and Liu, Fanghui},
  title        = {{AI4SLT}: Empirical Processes in {Lean 4} for Formal Statistical Learning Theory},
  year         = {2026},
  eprint       = {2602.02285},
  archivePrefix= {arXiv},
  primaryClass = {stat.ML}
}

@misc{zabolotnii2026patp,
  author       = {Zabolotnii, Serhii},
  title        = {Parametrically Adaptive Transition Polynomial: a Signed-Parity Continuous-$\alpha$ Extension of {Kunchenko} Stochastic Polynomials},
  year         = {2026},
  eprint       = {2605.14610},
  archivePrefix= {arXiv},
  primaryClass = {stat.ME}
}

@misc{zabolotnii2026gsa,
  author       = {Zabolotnii, Serhii},
  title        = {Generalized Stochastic Approximation of the Log-Likelihood Ratio for Robust Sequential Change-Point Detection},
  year         = {2026},
  eprint       = {2605.23419},
  archivePrefix= {arXiv},
  primaryClass = {math.ST}
}

@misc{zabolotnii2026cf,
  author       = {Zabolotnii, Serhii},
  title        = {Moment-Free {Kunchenko} Stochastic Polynomials via Empirical Characteristic Function},
  year         = {2026},
  eprint       = {2606.16289},
  archivePrefix= {arXiv},
  primaryClass = {math.ST}
}

@misc{zabolotnii2026paper4,
  author = {Zabolotnii, Serhii},
  title  = {One Truncated Likelihood Expansion: Estimation, Testing, and Classification as a Single Captured-Fraction Functional},
  year   = {2026},
  note   = {Preprint, Research Square; submitted to Statistical Papers, manuscript STPA-D-26-00487},
  doi    = {10.21203/rs.3.rs-10226291/v1}
}

\end{document}